\theoremstyle{definition}
\newtheorem{theorem}{Theorem}[section]
\theoremstyle{definition}
\newtheorem{thm}[theorem]{Theorem}
\newtheorem{defn}[theorem]{Definition}
\newtheorem{example}[theorem]{Example}
\newtheorem{c-example}[theorem]{Counter-example}
\newtheorem{lem}[theorem]{Lemma}
\newtheorem{cor}[theorem]{Corollary}
\newtheorem{prop}[theorem]{Proposition}
\theoremstyle{remark}
\newtheorem{rem}[theorem]{Remark}
\numberwithin{equation}{section}
\newcommand{\Cal}[1]{{\mathcal #1}}
\newcommand{\Sf}[1]{{\mathsf{#1}}}
\newcommand{\Par}[1]{\mathsf{Par}(\mathbb{#1})}
\newcommand{\paral}[1]{\ar@<0.3ex>[#1] \ar@<-0.3ex>[#1]}
\newcommand{\ParOrd}[1]{\mathsf{ParOrd}(\mathbb{#1})}
\newcommand{\Equiv}[1]{\mathsf{Eq}(\mathbb{#1})}
\newcommand{\Disc}[1]{\mathsf{Dis}(\mathbb{#1})}
\newcommand{\Stab}[1]{\mathsf{Stab}(\mathbb{#1})}
\newcommand{\parm}[2]{\langle{#1},{#2}\rangle}
\newcommand{\Id}[1]{\operatorname{Id}_{#1}}
\DeclareMathOperator{\Ob}{Ob}
\DeclareMathOperator{\Sub}{\mathsf{Sub}}
\DeclareMathOperator{\Hom}{hom}
\begin{document}

\title[Pretorsion theories in lextensive categories]{Pretorsion theories in lextensive categories}

\author[F. Borceux]{Francis Borceux}
\address{Universit\'e catholique de Louvain, Institut de Recherche en Math\'ematique et Physique, 1348 Louvain-la-Neuve, Belgium}
\email{francis.borceux@uclouvain.be}

\author[F. Campanini]{Federico Campanini} \thanks{ The second named author is a postdoctoral researcher of the Fonds de la Recherche Scientifique - FNRS. He was also supported by Ministero dell'Istruzione, dell'Universit\`a e della Ricerca (PRIN ``Categories, Algebras: Ring-Theoretical and Homological Approaches (CARTHA)'')}
\address{Universit\'e catholique de Louvain, Institut de Recherche en Math\'ematique et Physique, 1348 Louvain-la-Neuve, Belgium}
\email{federico.campanini@uclouvain.be}

\author[M. Gran]{Marino Gran}
\address{Universit\'e catholique de Louvain, Institut de Recherche en Math\'ematique et Physique, 1348 Louvain-la-Neuve, Belgium}
\email{marino.gran@uclouvain.be}

\begin{abstract} 
We propose a construction of a stable category for any pretorsion theory in a lextensive category. We prove the universal property of the stable category, that extends previous results obtained for the stable category of internal preorders in a pretopos. Some examples are provided in the categories of topological spaces and of (small) categories.
\end{abstract}

\maketitle

\section*{Introduction}

Pretorsion theories were defined in \cite{FF, FFG} as ``non-pointed torsion theories", where the zero object and the zero morphisms are replaced by a class of “trivial objects” and an ideal of ``trivial morphisms”, respectively. This notion generalizes many concepts of torsion theory introduced and investigated by several authors in pointed and multi-pointed categories \cite{D, BG, CDT, GJ, JT}. Pretorsion theories appear in several different contexts, such as topological spaces and topological groups \cite{FFG}, internal preorders \cite{FF, FFG2}, categories \cite{Xarez}, preordered groups \cite{GM}, V-groups \cite{Michel}, crossed modules, etc.
An important example that inspired this new approach was studied in \cite{FF}, and it concerns the category $\mathsf{PreOrd}$ of preordered sets, that is, the category whose objects are the pairs $(A,\rho)$ where $A$ is a set and $\rho$ is a preorder on $A$. This category is equivalent to the category $\mathsf{AlexTop}$ of Aleksandrov-discrete topological spaces (recall that a topological space is called Aleksandrov-discrete if the intersection of any family of open subsets is an open subset). A pretorsion theory in $\mathsf{PreOrd}$ is given by the pair $(\mathsf{Eq}, \mathsf{ParOrd})$, where $\mathsf{Eq}$ and $\mathsf{ParOrd}$ are the full subcategories of $\mathsf{PreOrd}$ whose objects are equivalence relations and partially ordered sets, respectively. This example was extended in \cite{FFG2} to the category $\mathsf{PreOrd}(\mathbb C) $ of internal preorders in any \emph{exact} category $\mathbb{C}$ (in the sense of \cite{Barr}). Here the pretorsion theory is given by the pair $(\mathsf{Eq} (\mathbb C), \mathsf{ParOrd}  (\mathbb C))$, where $\mathsf{Eq} (\mathbb C)$ is the full subcategory of all symmetric internal preorders (i.e. the equivalence relations in $\mathbb C$) while $\mathsf{ParOrd}  (\mathbb C)$ is the full subcategory of all antisymmetric internal preorders (i.e. the partial orders in $\mathbb C$). If the category $\mathbb C$ is a \emph{pretopos}, it is possible to construct a pointed quotient category, called the ``{\em stable category}", with an interesting property: the pretorsion theory $(\mathsf{Eq}, \mathsf{ParOrd})$ in $\mathsf{PreOrd} (\mathbb C)$ becomes a ``classical" torsion theory in the stable category. This construction was presented in \cite{BCG, BCG2} as a natural generalization of the one proposed in \cite{FF} for the category $\mathsf{PreOrd}$ of preordered sets. It is worth noting that in the case $\mathbb{C}=\mathsf{Set}$, the stable category provides an example of a pointed category arising as a quotient category of a topological category (the category of Aleksandrov-discrete spaces), where short exact sequences naturally occur, as in the case of abelian categories. The pretorsion theory $(\mathsf{Eq}, \mathsf{ParOrd})$ gives rise to an interesting Galois theory, whose main properties have been explored in \cite{FFG2}.
The stable category presented in \cite{BCG, BCG2} has an important categorical interpretation: the canonical functor $\Sigma \colon \mathsf{PreOrd} (\mathbb C) \to \mathsf{Stab} $ (where $\mathsf{Stab}$ is the stable category) is universal among all (finite coproduct preserving) torsion theory functors $G\colon \mathsf{PreOrd} (\mathbb C) \to \mathbb{X}$, where $\mathbb{X}$ is a pointed category with coproducts equipped with a torsion theory (see \cite[Section~2]{BCG2} for more details). Informally speaking, the quotient functor $\Sigma \colon \mathsf{PreOrd} (\mathbb C) \to \mathsf{Stab} $ is the best possible finite coproduct preserving functor transforming the pretorsion theory $(\mathsf{Equiv}  (\mathbb C), \mathsf{ParOrd}  (\mathbb C))$ into a torsion theory.

 Considering this result, a question naturally arises: is it possible to associate a universal torsion theory with \emph{any} given pretorsion theory? In this paper we show that this is indeed possible whenever the base category is merely a \emph{lextensive} category \cite{CLW}, hence naturally including the example mentioned above and many others (see \cite{FF, FFG, FFG2, Xarez} and the last section of this article for more details).
Most of techniques contained in this paper are based on those of \cite{BCG, BCG2}. The main reason for this is that the category $\mathsf{PreOrd} (\mathbb C)$ of internal preorders in a pretopos $\mathbb C$ is lextensive, and the proofs in \cite{BCG, BCG2} essentially used the ``good'' properties relating coproducts and limits in the category $\mathsf{PreOrd} (\mathbb C)$. We now fully exploit this fact in the present work, making our results more general and the key steps in the proofs more clear.

\section{Preliminary notions}

A category $\mathbb{C}$ with finite sums (=coproducts) is {\em extensive} if it has pullbacks along coprojections in a sum and the following condition holds: in the commutative diagram, where the bottom row is a sum
$$
\xymatrix{
A \ar[r] \ar[d] & C \ar[d] & B \ar[l] \ar[d]\\
X_1 \ar[r] & X_1 +X_2 & X_2\ar[l]
}
$$
the top row is a sum if and only if the two squares are pullbacks \cite{CLW}. The property saying that the upper row of the diagram is a sum whenever the two squares are pullbacks is called ``universality of sums". 
 Recall that a sum of two objects $X_1$ and $X_2$ is \emph{disjoint} if the coprojections $i_1\colon X_1\to X_1 + X_2$ and $i_2\colon X_2\to X_1 + X_2$ are monomorphisms and the intersection $X_1\cap X_2$ is an initial object in the category $\Sub(X_1+ X_2)$ of subobjects of $X_1 +X_2$ (we shall use the ``intersection symbol" to denote the categorical product in $\Sub(X)$ and we shall call it intersection, as usual).
If $\mathbb{C}$ has finite sums and pullbacks along sum coprojections, extensivity is equivalent to the property of having disjoint and universal finite sums \cite[Proposition~2.14]{CLW}. Moreover, in an extensive category the initial object 0 is {\em strict}, that is, every morphism with codomain 0 is an isomorphism.

An extensive category with all finite limits is called {\em lextensive} \cite{CLW}. If $\mathbb{C}$ is a lextensive category, finite products distribute over the sums, that is there is a natural isomorphism $$
X_1\times(X_2+X_3)\cong (X_1\times X_2)+(X_1\times X_3)
$$
for every $X_1,X_2,X_3$ in $\mathbb{C}$.

\begin{example}
Any pretopos is a lextensive category, hence in particular the category $\Sf{Set}$ of sets. The category $\mathsf{PreOrd}(\mathbb C)$ of internal preorders and $\mathsf{Cat} (\mathbb C)$ of internal categories in a lextensive category $\mathbb C$ is again lextensive: in particular, this is the case for the category $\mathsf{Cat}$ of categories. Other examples of lextensive categories include the category $\mathsf{Top}$ of topological spaces, the dual category of the category of commutative rings and the category of affine schemes (see \cite{CLW}).
\end{example}

\begin{rem}
If $\mathbb{C}$ is a lextensive category and $X$ is an object in $\mathbb{C}$, then $\Sub(X)$ is preordered and the existence of finite limits implies that it is a bounded semi-lattice with meet $\cap$. Suppose we can write $X=A+A'$ for some $A,A'$ in $\mathbb{C}$. Since sums are disjoint, $A$ and $A'$ are subobjects of $X$ and $A+A'=X$ is the least upper bound of $A$ and $A'$ in $\Sub(X)$. Thus we can say (with a slight abuse of notation, since $\Sub(X)$ is not a lattice) that $A$ is a {\em complemented subobject of $X$} and $A'$ a complement of $A$ in $X$. Notice that coproduct decompositions $X=A+A'$ correspond to the morphisms $X\to 1+1$, in the following sense.
If we have a morphism $f:X\to 1+1$, then pulling back the coprojections $1\rightarrow 1+1 \leftarrow 1$ along $f$ we get a commutative diagram
$$
\xymatrix{
A \ar[r] \ar[d] & X \ar[d]^f & A' \ar[l] \ar[d]\\
1 \ar[r] & 1 + 1 & 1\ar[l]
}
$$
where the top row is a sum by the universality of sums.
On the other hand, a decomposition $X=A+A'$ together with the unique morphisms $A\to 1$ and $A'\to 1$ give us a unique morphism $f\colon X \to 1+1$.
\end{rem}

\section{Basics on pretorsion theories}
We recall the definition of pretorsion theory for general categories, as defined in \cite{FF, FFG}. Let $\mathbb{C}$ be an arbitrary category and consider a pair $(\Cal T, \Cal F)$ of two replete full subcategories of $\mathbb{C}$. Set also $\Cal Z:=\Cal T\cap\Cal F$ and call it \emph{the class of ``trivial objects"}. A morphism $f\colon A\to A'$ in $\mathbb{C}$ is $\Cal Z$-trivial if it factors through an object of $\Cal Z$. Notice that the class of trivial morphisms in $\mathbb{C}$ is an ideal of morphisms in the sense of Ehresmann \cite{Ehr}, and thus it is possible to consider the notions of $\mathcal Z$-kernel and of $\mathcal Z$-cokernel, defined by replacing, in the definition of kernel and cokernel, the ideal of zero morphisms with the ideal of trivial morphisms induced by the subcategory $\mathcal Z$. More precisely, we say that a morphism $\varepsilon\colon X\to A$ in $\mathbb{C} $ is a \emph{$\Cal Z$-kernel} of $f\colon A \to A'$ if $f\varepsilon$ is a $\Cal Z$-trivial morphism and, whenever $\lambda \colon Y\to A$ is a morphism in $\mathbb{C}$ and $f\lambda$ is $\Cal Z$-trivial there exists then a unique morphism $\lambda'\colon Y\to X$ in $\mathbb{C}$ such that $\lambda=\varepsilon\lambda'$.
The notion of \emph{$\Cal Z$-cokernel} is defined dually. A sequence $A\overset{f}{\to}B\overset{g}{\to}C$ is called a \emph{short $\Cal Z$-exact sequence} if $f$ is the $\Cal Z$-kernel of $g$ and $g$ is the $\Cal Z$-cokernel of $f$. We say that the pair $(\Cal T,\Cal F)$ is a \emph{pretorsion theory} in $\mathbb{C}$ if the following two properties are satisfied:
\begin{itemize}
\item
any morphism from an object $T\in\Cal T$ to an object $F\in\Cal F$ is $\Cal Z$-trivial;
\item
for every object $X$ of $\mathbb{C}$ there is a short $\Cal Z$-exact sequence
$$\xymatrix{ T_X \ar[r]^f &  X \ar[r]^g &  F_X}$$ with $T_X\in\Cal T$ and $F_X\in\Cal F$.
\end{itemize}
Recall from \cite{FFG} that given a pretorsion theory $(\Cal T, \Cal F)$ in a category $\mathbb{C}$, there are two functors:
\begin{itemize}
\item
a ``torsion functor" $T\colon \mathbb{C}\to \Cal T$ which is the right adjoint of the full embedding $E_T\colon \Cal T \to \mathbb{C}$ of the torsion subcategory $\Cal T$;
\item
a ``torsion-free functor" $F\colon \mathbb{C}\to \Cal F$ which is the left adjoint of the full embedding $E_F\colon \Cal F \to \mathbb{C}$ of the torsion-free subcategory $\Cal F$.
\end{itemize}
For every object $X \in \mathbb{C}$ there is a short $\Cal Z$-exact sequence
$$
\xymatrix{T(X)\ar[r]^-{\varepsilon_X} & X \ar[r]^-{\eta_X} & F(X)}
$$
where the monomorphism $\varepsilon_X$ is the $X$-component of the counit $\varepsilon$ of the adjunction
$$
\xymatrix{\Cal T \ar@/^5pt/[r]^{E_T} & \mathbb{C},\ar@/^5pt/[l]^{T}_\perp}
$$
while the epimorphism $\eta_X$ is the $X$-component of the unit $\eta$ of the adjunction
$$
\xymatrix{\mathbb{C} \ar@/^5pt/[r]^F & \Cal F \ar@/^5pt/[l]^{E_F}_\perp}
$$

The following lemma collects some basic properties of pretorsion theories we will use later on.
\begin{lem}\label{Lemma_1}
Let $(\Cal T,\Cal F)$ be a pretorsion theory in a category $\mathbb{C}$.
\begin{enumerate}
\item
The initial object 0 (if it exists) is in $\Cal T$, while the terminal object 1 (if it exists) is in $\Cal F$.
\item
If $\mathbb{C}$ has disjoint sums, then the initial object is in $\Cal T \cap \Cal F$.
\item
$\Cal T$ is closed under extremal quotients, i.e. if $q \colon X \rightarrow Y$ is an extremal epimorphism with $X$ in $\Cal T$, then $Y$ is in $\Cal T$. Dually $\Cal F$ is closed under extremal subobjects.
\item
If $\mathbb{C}$ is balanced, then $\Cal T$ is closed under quotients and $\Cal F$ is closed under subobjects.
\item
If $\Cal Z$ is closed under quotients [resp. subobjects], then $\Cal T$ is closed under quotients [resp. $\Cal F$ is closed under subobjects].
\item
Assume that $\mathbb{C}$ admits pullbacks along monomorphisms. Then $\Cal T$ is closed under subobjects if and only if for every monomorphism $n\colon X\to Y$ the left-hand square of the diagram
$$
\xymatrix{
T(X) \ar[r]^{\varepsilon_X}\ar[d]_{T(n)} & X \ar[r]^{\eta_X} \ar[d]^n & F(X) \ar[d]^{F(n)}\\
T(Y) \ar[r]_{\varepsilon_Y} & Y \ar[r]_{\eta_Y} & F(Y)
}
$$
is a pullback.
\item
$\Cal T$ [resp. $\Cal F$] is closed under all colimits [resp. limits] existing in $\mathbb{C}$.
\end{enumerate}
\end{lem}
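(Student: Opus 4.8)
The backbone of the whole lemma is a single observation about the two adjunctions: the counit $\varepsilon_X\colon T(X)\to X$ is always a monomorphism and the unit $\eta_X\colon X\to F(X)$ is always an epimorphism, and $X\in\Cal T$ (resp.\ $X\in\Cal F$) precisely when $\varepsilon_X$ (resp.\ $\eta_X$) is an isomorphism. Since a monomorphism that is also a split epimorphism is invertible (and dually), I would reduce most parts to exhibiting a section of $\varepsilon_X$ or a retraction of $\eta_X$. For (1), the terminal map $F(1)\to 1$ is forced to be a retraction of $\eta_1$ because $1$ is terminal, so $\eta_1$ is a split monomorphism; being also an epimorphism it is invertible, whence $1\in\Cal F$. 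Dually the unique map $0\to T(0)$ is a section of $\varepsilon_0$, so $\varepsilon_0$ is a split epimorphism and a monomorphism, hence an isomorphism and $0\in\Cal T$.

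For (2) I would exploit disjointness directly. The unit $\eta_0\colon 0\to F(0)$ is an epimorphism; composing it with the two coprojections $i_1,i_2\colon F(0)\to F(0)+F(0)$, both composites $i_k\eta_0$ equal the unique map out of the initial object, so $i_1=i_2$. Since the coprojections are monomorphisms their intersection is all of $F(0)$, while disjointness of the sum forces that intersection to be $0$; hence $F(0)\cong 0$, so $\eta_0$ is an isomorphism and $0\in\Cal F$.

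Parts (3)--(5) and (7) are again reductions to the splitting principle. For (3) the coreflection property factors an extremal epimorphism $q\colon X\to Y$ with $X\in\Cal T$ as $q=\varepsilon_Y\circ\tilde q$; since $\varepsilon_Y$ is a monomorphism and $q$ is extremal, $\varepsilon_Y$ is invertible and $Y\in\Cal T$. Part (4) then follows because in a balanced category every epimorphism is extremal, so ``closed under quotients'' collapses to ``closed under extremal quotients''. For (5), if $q\colon X\to Y$ is a quotient with $X\in\Cal T$, then $\eta_Y q$ is a trivial epimorphism, hence factors through some $Z\in\Cal Z$ via an epimorphism $Z\to F(Y)$; closure of $\Cal Z$ under quotients gives $F(Y)\in\Cal Z$, so $\eta_Y$ is itself trivial, and the universal property of $\varepsilon_Y$ as the $\Cal Z$-kernel of $\eta_Y$ provides a section of $\varepsilon_Y$, forcing $Y\in\Cal T$. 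For (7) the embedding $E_T$ is a left adjoint and hence preserves colimits; concretely, factoring a colimit cocone with vertex $C$ through $\varepsilon_C$ yields a section of $\varepsilon_C$, so $C\in\Cal T$.

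Part (6) is the one genuine two-sided statement, and I expect its forward implication to be the main obstacle. The easy direction uses that a pullback of an isomorphism is an isomorphism: if $Y\in\Cal T$ (so $\varepsilon_Y$ is invertible) and the left square is a pullback, then $\varepsilon_X$ is invertible, giving closure of $\Cal T$ under subobjects. For the converse, given a monomorphism $n\colon X\to Y$ I would form the pullback $P:=X\times_Y T(Y)$; both projections are monomorphisms, so $P$ is a subobject of $T(Y)\in\Cal T$ and hence lies in $\Cal T$ by hypothesis. The crux is then to identify $P$ with $T(X)$: the projection $P\to X$ factors uniquely through $\varepsilon_X$ by the counit's universal property, while naturality of $\varepsilon$ produces a comparison $T(X)\to P$, and checking that these are mutually inverse subobjects of $X$ shows the left square is precisely the pullback square. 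The delicate points throughout are the recurring use of ``a monomorphism which is a split (or extremal) epimorphism is an isomorphism'' and, in (6), the verification that the two canonically induced maps between $T(X)$ and $P$ are inverse to one another.
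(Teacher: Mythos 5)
Your proof is correct, and on the substantive parts it follows the paper's own route. In (1) and (2) the paper's terse ``monomorphism with codomain $0$, hence an isomorphism'' and its disjointness argument for $F(0)$ are exactly your split-epi-plus-mono and coprojection-intersection observations; in (6) the paper forms the same pullback $P=X\times_Y T(Y)$, gets $P\in\Cal T$ from closure under subobjects, factors $P\to X$ through $\varepsilon_X$, and checks that the two canonically induced maps between $T(X)$ and $P$ are mutually inverse; in (7) it simply invokes coreflectivity, which unwinds to precisely your section-of-$\varepsilon_C$ argument. The genuine differences are confined to (3)--(5). The paper disposes of (3) and of the key step of (5) by citation (\cite[Corollary~5.3]{FFG} and \cite[Lemma~2.4]{FFG}), whereas you give self-contained arguments: for (3), factoring the extremal epimorphism through the monomorphism $\varepsilon_Y$; for (5), showing $F(Y)\in\Cal Z$ and then extracting a section of $\varepsilon_Y$ from the $\Cal Z$-kernel universal property (the paper instead proves the dual $\Cal F$-half, showing $T(N)$ is a subobject of $T(X)\in\Cal Z$ --- immaterial, since the two halves are dual). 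For (4) the paper argues directly that $\eta_N$ is mono (because $n$ is) and epi (being a $\Cal Z$-cokernel), hence iso by balancedness; you instead deduce (4) from (3) by noting that in a balanced category every epimorphism is extremal. Your reductions are slightly slicker and keep the whole lemma self-contained under the single ``mono plus split/extremal epi is iso'' principle; the paper's choices make the items independent of one another but lean on external references.
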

\begin{proof}
$(1)$ Assume that $\mathbb{C}$ has an initial object. Then there is a $\Cal Z$-exact sequence $T(0)\to 0 \to F(0)$ with $T(0) \in \Cal T$ and $F(0)  \in \Cal F$. The first arrow is a monomorphism with codomain $0$, hence an isomorphism. Dually, one has the corresponding result for the terminal object $1$.

$(2)$ We only need to show that $0 \in \Cal F$. Let $T(0)\to 0 \to F(0)$ be the canonical short $\Cal Z$-exact sequence of $0$. Then $0 \to F(0)$ is an epimorphism, since it is a $\Cal Z$-cokernel. Thus the coprojections $F(0)\rightrightarrows F(0)+F(0)$ are equal, and therefore $F(0)\cap F(0)=F(0)$ as subobjects of $F(0)+F(0)$. But sums are disjoint, so we have $0= F(0)\cap F(0)= F(0) \in \Cal F$.

$(3)$ It suffices to notice that the proof given in \cite[Corollary~5.3]{FFG} works without any assumption on $\Cal Z$.

$(4)$ Assume that $\mathbb{C}$ is balanced and let $n\colon N\to X$ be a monomorphism with $X \in \Cal F$. Then we have a commutative diagram
$$
\xymatrix{
T(N) \ar[r]^{\varepsilon_N}\ar[d]_{T(n)} & N \ar[r]^{\eta_N} \ar[d]^n & F(N) \ar[d]^{F(n)}\\
T(X) \ar[r]_{\varepsilon_X} & X \ar@{=}[r]_{\eta_X} & X
}
$$
where the two rows are $\Cal Z$-exact sequences and $\eta_N$ is both an epimorphism (because it is a $\Cal Z$-cokernel) and a monomorphism (because so is $n$). Under our assumption $\eta_N$ is then an isomorphism and $N \in \Cal F$. The statement for $\Cal T$ can be proved in a similar way.

$(5)$ Assume that $\Cal Z$ is closed under subobjects and let $n\colon N\to X$ be a monomorphism with $X \in \Cal F$. We get the same diagram as above, where $T(X)\in \Cal Z$ and $T(n)$ is a monomorphism because $n\circ\varepsilon_N$ is so. By hypothesis $T(N) \in \Cal Z$ and thus $\eta_N$ is an isomorphism by \cite[Lemma~2.4]{FFG}. The statement for $\Cal T$ can be proved in a similar way.

$(6)$ Let us first prove the ``if-part". For this, suppose that $Y \in \Cal T$ and $n\colon X\to Y$ is a monomorphism. Then $\varepsilon_Y$ is an isomorphism, hence so is $\varepsilon_X$. Thus $X\cong T(X) \in \Cal T$, that is $\Cal T$ is closed under subobjects.

Now assume that $\Cal T$ is closed under subobjects and consider the diagram
$$
\xymatrix@!=40pt{
P \ar@/^2pc/[rrd]^b \ar@/_2pc/[rdd]^a \ar@/^1pc/[rd]^{T(b)}& & \\
 & T(X) \ar@/^1pc/[ul]^{\sigma} \ar[d]_{T(n)} \ar[r]^{\varepsilon_X} & X \ar[d]^n\\
 & T(Y) \ar[r]_{\varepsilon_Y} & Y\\
}
$$
where the outer quadrangle is a pullback, $\sigma$ is the unique morphism induced by the universal property of the pullback, and $T(b)$ is the (unique) morphism such that $\varepsilon_X T(b)=b$ (observe that $P \in \Cal T$ because $a$ is a monomorphism and $T(Y)\in \Cal T$). It is easy to see that all arrows are monomorphisms and that $\sigma$ is an isomorphism.

$(7)$ It follows from the fact that $\Cal T$ [resp. $\Cal F$] is a coreflective [resp. reflective] full subcategory of $\mathbb{C}$.
\end{proof}
\begin{rem}
The property $(6)$ above generalizes the following well-known result for torsion theories in abelian categories: a torsion theory $(\Cal T, \Cal F)$ is hereditary if and only if for every monomorphism $X\rightarrow Y$ we have $T(X)=T(Y)\cap X$ (see also \cite{BG} for the more general case of homological categories).
\end{rem}
\begin{example}
There are only three pretorsion theories in $\Sf{Set}$, namely $(\{\emptyset\},\Sf{Set})$, $(\Sf{Set}, \Sf{Set})$ and $(\Sf{Set}, \Cal F_0)$, where $\Cal F_0$ is the class of sets with at most one element.

Indeed, the category of sets has only one proper full coreflective subcategory, that is, the one whose unique object is the empty set, and only one proper full reflective subcategory closed under subobjects, namely $\Cal F_0$.

\end{example}

In the last part of this section we aim at identifying some properties of pretorsion theories that hold in any lextensive category $\mathbb{C}$.

\begin{rem}
It is worth noting that most of the results we are going to present until the end of this section hold in any extensive category $\mathbb{C}$, because the only limits involved are pullbacks along injections. Nevertheless, since we are interested in lextensive categories, for ease of readability and in order to avoid confusion, we prefer to always assume that $\mathbb{C}$ has all finite limits.
\end{rem}

\begin{prop}
Let $(\Cal T, \Cal F)$ be a pretorsion theory in a lextensive category $\mathbb{C}$. Then $\Cal T$ is closed under coproducts, that is, if $X,Y \in \Cal T$ then $X+Y \in \Cal T$.
\end{prop}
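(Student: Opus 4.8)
The plan is to reduce the statement to showing that the counit component $\varepsilon_{X+Y}\colon T(X+Y)\to X+Y$ is an isomorphism. Indeed, by Lemma~\ref{Lemma_1}(7) the subcategory $\Cal T$ is coreflective, and, $\Cal T$ being replete, an object $Z$ lies in $\Cal T$ if and only if $\varepsilon_Z$ is an isomorphism. Now $\varepsilon_{X+Y}$ is already a monomorphism, so it will be enough to produce a section $s\colon X+Y\to T(X+Y)$ with $\varepsilon_{X+Y}\,s=\Id{X+Y}$: from $\varepsilon_{X+Y}\,s\,\varepsilon_{X+Y}=\varepsilon_{X+Y}=\varepsilon_{X+Y}\,\Id{T(X+Y)}$ and the monicity of $\varepsilon_{X+Y}$ one deduces $s\,\varepsilon_{X+Y}=\Id{T(X+Y)}$, so that a monomorphism which is a split epimorphism is automatically invertible.

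To construct $s$ I would exploit the naturality of the counit $\varepsilon$ along the two coprojections $i_1\colon X\to X+Y$ and $i_2\colon Y\to X+Y$, which yields $i_1\,\varepsilon_X=\varepsilon_{X+Y}\,T(i_1)$ and $i_2\,\varepsilon_Y=\varepsilon_{X+Y}\,T(i_2)$. Since $X,Y\in\Cal T$, the components $\varepsilon_X$ and $\varepsilon_Y$ are isomorphisms, so I may set $a_1:=T(i_1)\,\varepsilon_X^{-1}$ and $a_2:=T(i_2)\,\varepsilon_Y^{-1}$, which satisfy $\varepsilon_{X+Y}\,a_1=i_1$ and $\varepsilon_{X+Y}\,a_2=i_2$. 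Copairing, I take $s:=[a_1,a_2]\colon X+Y\to T(X+Y)$ given by the universal property of the sum; then $\varepsilon_{X+Y}\,s=[\varepsilon_{X+Y}\,a_1,\varepsilon_{X+Y}\,a_2]=[i_1,i_2]=\Id{X+Y}$, as required. Hence $\varepsilon_{X+Y}$ is an isomorphism and $X+Y\cong T(X+Y)\in\Cal T$.

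The one ingredient that carries real weight is the fact that $\varepsilon_{X+Y}$ is a monomorphism; this is precisely the general property of the counit of a pretorsion theory recalled just before Lemma~\ref{Lemma_1}. Granting it, the argument is entirely formal: it uses only the functoriality of $T$, the naturality of $\varepsilon$, and the universal property of the coproduct, and in particular it never invokes disjointness, universality of sums, or any pullback. So I do not expect a genuine obstacle here; the main point to appreciate is that closure under coproducts is obtained merely by splitting $\varepsilon_{X+Y}$, without having to prove that $T$ preserves coproducts in general.
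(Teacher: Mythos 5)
Your proof is correct and is essentially the paper's own argument: the paper disposes of this proposition in one line by citing Lemma~\ref{Lemma_1}(7) ($\Cal T$ is closed under all colimits existing in $\mathbb{C}$, being a full coreflective subcategory), and your construction of the section $s=[a_1,a_2]$ of $\varepsilon_{X+Y}$ is precisely the proof of that general coreflectivity fact specialized to binary coproducts. The only cosmetic difference is that you invoke monicity of $\varepsilon_{X+Y}$ (a pretorsion-specific property) to upgrade the splitting to an isomorphism, whereas the uniqueness clause in the universal property of the coreflection yields $s\,\varepsilon_{X+Y}=\Id{T(X+Y)}$ with no extra hypotheses.
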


\begin{proof}
This result is a particular case of Lemma~\ref{Lemma_1}(7).
\end{proof}

\begin{prop}\label{F_closed_complemented_subobjects}
Let $(\Cal T, \Cal F)$ be a pretorsion theory in a lextensive category $\mathbb{C}$. Then $\Cal F$ is closed under complemented subobjects, that is, if $X+Y \in \Cal F$ then $X,Y \in \Cal F$.
\end{prop}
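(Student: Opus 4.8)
By the symmetry of the roles of $X$ and $Y$, the plan is to prove only that $X\in\Cal F$; since $\Cal F$ is replete, this amounts to showing that the unit component $\eta_X\colon X\to F(X)$ of the reflection $F\dashv E_F$ is an isomorphism. The idea is to exhibit $\eta_X$ as a pullback of $\eta_{X+Y}$, which is an isomorphism by hypothesis (because $X+Y\in\Cal F$), and then to invoke the stability of isomorphisms under pullback. The input that makes this work is extensivity, which converts the relevant naturality square into a pullback.

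First I would record that the torsion-free reflector $F$, being a left adjoint, preserves finite coproducts; hence $F(X)\xrightarrow{F(i_1)}F(X+Y)\xleftarrow{F(i_2)}F(Y)$ is again a sum, where $i_1,i_2$ denote the coprojections into $X+Y$. I then form the naturality diagram of the unit $\eta$ along these coprojections:
$$
\xymatrix{
X \ar[r]^-{i_1} \ar[d]_{\eta_X} & X+Y \ar[d]^{\eta_{X+Y}} & Y \ar[l]_-{i_2} \ar[d]^{\eta_Y} \\
F(X) \ar[r]_-{F(i_1)} & F(X+Y) & F(Y) \ar[l]^-{F(i_2)}
}
$$
Here both squares commute by naturality of $\eta$, the top row $X\to X+Y\leftarrow Y$ is a sum, and the bottom row is a sum as just observed. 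By the characterisation of extensive categories recalled in Section~1 (with the bottom row playing the role of the given sum), the top row being a sum forces both squares to be pullbacks. In particular the left-hand square is a pullback, so $\eta_X$ is precisely the pullback of $\eta_{X+Y}$ along the coprojection $F(i_1)$.

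Finally, since $X+Y\in\Cal F$ the component $\eta_{X+Y}$ is an isomorphism, and isomorphisms are stable under pullback; therefore $\eta_X$ is an isomorphism and $X\cong F(X)\in\Cal F$. Running the same argument with $i_2$ yields $Y\in\Cal F$, which completes the proof. The only step requiring genuine care is the verification that the bottom row of the diagram is a sum, i.e.\ that $F(i_1)$ and $F(i_2)$ really are the coprojections of a coproduct $F(X+Y)$; this is exactly the preservation of finite coproducts by the left adjoint $F$, and it is what allows extensivity to be applied. Once this is in place the pullback-stability of isomorphisms closes the argument at once, with no further computation needed.
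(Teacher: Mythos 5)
Your overall strategy---realising $\eta_X$ as the pullback of an isomorphism along a coprojection and invoking extensivity---is exactly the idea behind the paper's proof, but the step you yourself single out as ``the only step requiring genuine care'' contains a genuine gap. The left adjoint $F\colon \mathbb{C}\to \Cal F$ preserves finite coproducts \emph{as a functor into $\Cal F$}: what follows from adjointness is that $F(X)\xrightarrow{F(i_1)}F(X+Y)\xleftarrow{F(i_2)}F(Y)$ is a coproduct diagram in the subcategory $\Cal F$, not in $\mathbb{C}$. Since the inclusion $E_F$ is a right adjoint, it has no reason to preserve colimits, so a sum in $\Cal F$ need not be a sum in $\mathbb{C}$ (in general $F(X)+_{\Cal F}F(Y)=F\bigl(F(X)+_{\mathbb{C}}F(Y)\bigr)$), and the extensivity of $\mathbb{C}$ can only be applied when the bottom row is a sum \emph{in $\mathbb{C}$}. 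That this is not a pedantic distinction is shown by the fact that your justification nowhere uses the hypothesis $X+Y\in\Cal F$, while the claim is false without it: in $\Sf{Set}$ with the pretorsion theory $(\Sf{Set},\Cal F_0)$ of the paper's Example 2.4, taking $X=Y=1$ gives as your bottom row the cospan $1\to 1\leftarrow 1$, which is a coproduct in $\Cal F_0$ but certainly not in $\Sf{Set}$. The paper explicitly flags this very subtlety (``a priori $X+Y$ is the coproduct of $F(X)$ and $F(Y)$ in $\Cal F$ but not necessarily in $\mathbb{C}$'').

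Under the actual hypothesis $X+Y\in\Cal F$ your intermediate claim is true, but proving it is precisely the content of the paper's argument: one forms $\tau\colon F(X)+F(Y)\to X+Y$, induced by the factorisations $F(\iota_1)$, $F(\iota_2)$ of the coprojections through the units, and $\sigma\colon X+Y\to F(X)+F(Y)$, induced by $\iota_1'\eta_X$ and $\iota_2'\eta_Y$, and checks by a direct computation (using that $\eta_X$, $\eta_Y$ are epimorphisms, being $\Cal Z$-cokernels) that $\sigma$ and $\tau$ are mutually inverse. This is exactly the statement that $X+Y\cong F(X+Y)$, equipped with $F(\iota_1)$ and $F(\iota_2)$, is a coproduct of $F(X)$ and $F(Y)$ \emph{in $\mathbb{C}$}, i.e.\ that your bottom row is a sum in $\mathbb{C}$. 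Once this lemma is in place, your extensivity-plus-pullback-of-isomorphism argument goes through and coincides with the paper's proof: both squares are pullbacks, the middle vertical map is an isomorphism, hence $\eta_X$ and $\eta_Y$ are isomorphisms and $X,Y\in\Cal F$. So the piece you treated as a routine remark is in fact the heart of the proof, and as justified it does not stand.
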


\begin{proof}
Let $X, Y \in \mathbb{C}$ be two objects such that $X+Y \in \Cal F$ and consider the commutative diagram 
$$
\xymatrix@=35pt{
X \ar@{>->}[r]^-{\iota_1} \ar[d]_{\eta_X} & X+Y \ar@<0.5ex>[d]^\sigma & Y \ar@{>->}[l]_-{\iota_2} \ar[d]^{\eta_Y}\\
F(X) \ar[ru]^-{F(\iota_1)} \ar@{>->}[r]_-{\iota_1'}& F(X)+F(Y)\ar@<0.5ex>[u]^\tau & F(Y) \ar@{>->}[l]^-{\iota_2'} \ar[lu]_{F(\iota_2)}
}
$$
where $\tau$ and $\sigma$ are the morphisms induced by the coproducts on the bottom and top row respectively (notice that a priori $X+Y$ is the coproduct of $F(X)$ and $F(Y)$ in $\Cal F$ but not necessarily in $\mathbb{C}$). A straightforward computation shows that $\sigma$ is the inverse of $\tau$. Moreover, the two squares
$$
\xymatrix{
X \ar@{>->}[r]^-{\iota_1} \ar[d]_{\eta_X} & X+Y \ar[d]^\sigma & Y \ar@{>->}[l]_-{\iota_1} \ar[d]^{\eta_Y}\\
F(X) \ar@{>->}[r]_-{\iota_1'} & F(X)+F(Y) & F(Y) \ar@{>->}[l]^-{\iota_2'} 
}
$$
are pullbacks by extensivity, and so $\eta_X$ and $\eta_Y$ are isomorphisms. It follows that $X$ and $Y$ are in $\Cal F$, as desired.
\end{proof}

\begin{lem}\label{lemma_coproduct}
Let $(\Cal T, \Cal F)$ be a pretorsion theory in a lextensive category $\mathbb{C}$. Let $g_1\colon X_1\to Y$ and $g_2 \colon X_2 \to Y$ be two trivial morphisms. Then the induced morphism $g\colon X_1+X_2\to Y$ is trivial provided one of the following conditions hold:
\begin{enumerate}
\item[(a)]
$g$ has a $\Cal Z$-kernel;
\item[(b)]
$\Cal Z$ is closed under coproducts.
\end{enumerate}
\end{lem}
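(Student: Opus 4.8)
The plan is to treat the two cases separately, since case (b) is almost immediate while case (a) hides an isomorphism that must be uncovered. In both cases I rely on the fact, recalled in the preliminaries, that the trivial morphisms form an ideal: pre- or post-composing a trivial morphism with an arbitrary morphism yields a trivial morphism.

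For case (b), I would simply combine the factorizations of $g_1$ and $g_2$. Since these morphisms are trivial, write $g_1 = b_1 a_1$ and $g_2 = b_2 a_2$ with $a_i\colon X_i\to Z_i$, $b_i\colon Z_i\to Y$ and $Z_1,Z_2\in\Cal Z$. Composing $a_1,a_2$ with the coprojections into $Z_1+Z_2$ induces a morphism $a\colon X_1+X_2\to Z_1+Z_2$, and the induced morphism $[b_1,b_2]\colon Z_1+Z_2\to Y$ satisfies $[b_1,b_2]\,a=g$ by the universal property of the coproduct. As $\Cal Z$ is assumed closed under coproducts, $Z_1+Z_2\in\Cal Z$, so $g$ factors through an object of $\Cal Z$ and is therefore trivial.

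For case (a), the crucial preliminary observation is that a $\Cal Z$-kernel is automatically a monomorphism: this follows from the uniqueness clause in its defining universal property together with the ideal property. Let $\varepsilon\colon K\to X_1+X_2$ be the $\Cal Z$-kernel of $g$. Because $g\iota_1=g_1$ and $g\iota_2=g_2$ are trivial, each coprojection $\iota_i$ lifts (uniquely) through $\varepsilon$, say $\iota_i=\varepsilon\iota_i'$ with $\iota_i'\colon X_i\to K$. The pair $\iota_1',\iota_2'$ induces a morphism $h=[\iota_1',\iota_2']\colon X_1+X_2\to K$ with $\varepsilon h\iota_i=\iota_i$ for $i=1,2$, whence $\varepsilon h=\mathrm{id}$ by the universal property of the coproduct. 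Thus $\varepsilon$ is a split epimorphism, and being also a monomorphism it is an isomorphism. Since $g\varepsilon$ is trivial by the very definition of $\Cal Z$-kernel, $g=(g\varepsilon)\varepsilon^{-1}$ is trivial as well.

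I expect the main obstacle to lie entirely in case (a): one must first verify that the $\Cal Z$-kernel is a monomorphism, and then recognize that the triviality of $g_1$ and $g_2$ forces the two coprojections to lift along $\varepsilon$ in a way that exhibits a section of $\varepsilon$. Once this ``split epimorphism plus monomorphism implies isomorphism'' step is in place, the triviality of $g$ drops out from the ideal property. Everything else reduces to routine manipulations with factorizations and the universal property of coproducts.
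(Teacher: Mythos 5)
Your proof is correct and follows essentially the same route as the paper: in case (b) you combine the factorizations through $Z_1+Z_2$ exactly as the paper does, and in case (a) you lift the coprojections through the $\Cal Z$-kernel and show it is an isomorphism, just as the paper does. The only cosmetic difference is that where the paper invokes the fact that $(\iota_1,\iota_2)$ is jointly extremal epimorphic and cites a lemma from the literature to conclude triviality, you inline the proofs of both facts (constructing the splitting $[\iota_1',\iota_2']$ explicitly and using the ideal property directly), which amounts to the same argument.
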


\begin{proof}
First assume that $g$ has a $\Cal Z$-kernel. Notice that $\Id {X_\ell}\colon X_\ell \to X_\ell$ is the $\Cal Z$-kernel of $g_\ell$ for $\ell=1,2$. Let $k\colon K\to X_1+X_2$ be the $\Cal Z$-kernel of $g$ and let $\iota_\ell\colon X_\ell\rightarrowtail X_1+X_2$ $(\ell=1,2)$ be the coprojections, which factor through $k$ because $g\cdot \iota_\ell=g_\ell$ are trivial. Thus we have a commutative diagram
$$
\xymatrix{
X_1 \ar[r]^-{j_1} \ar@{=}[d] & K \ar[d]^{k} & X_2 \ar[l]_-{j_2} \ar@{=}[d]\\
X_1 \ar@{ >->}[r]^-{\iota_1} \ar[dr]_{g_1} & X_1+X_2 \ar[d]^{g} &  X_2 \ar@{ >->}[l]_-{\iota_2} \ar[dl]^{g_2}\\
 & Y &
}
$$
and the monomorphism $k$ is actually an isomorphism, since the pair ($\iota_1$, $\iota_2$) is jointly extremal epimorphic.
By \cite[Lemma~2.4(b)]{FFG} $g$ is trivial.

Now assume that $\Cal Z$ is closed under coproducts. If $g_\ell$ factors through $Z_\ell \in \Cal Z$ ($\ell=1,2$), then $g$ factors through $Z_1+Z_2$.
\end{proof}

\begin{lem}\label{magenta}
Let $(\Cal T, \Cal F)$ be a pretorsion theory in a lextensive category $\mathbb{C}$. Then the following conditions are equivalent:
\begin{enumerate}
\item[(i)]
$\Cal Z$ is closed in $\mathcal C$ under complemented subobjects, that is, if $X+Y \in \Cal Z$ then $X,Y \in \Cal Z$;
\item[(ii)]
the ideal of trivial morphisms is stable under pullbacks along coprojections, that is, for every pullback diagram of the form
$$
\xymatrix{
W \ar[r]^{\tilde{\beta}} \ar[d]_{\tilde{g}} & X \ar[d]^g\\
B \ar@{ >->}[r]_\beta & Y 
}
$$
where $B$ is a complemented subobject of $Y$, if $g$ is trivial then $\tilde{g}$ is trivial;
\item[(iii)]
if the composite morphism $\xymatrix{ W\ar[r]^g & {B\, } \ar@{>->}[r]^\beta & Y}$ is trivial and $\beta$ is a coprojection, then $g$ is trivial.
\end{enumerate}
\end{lem}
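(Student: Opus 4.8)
The plan is to prove the cyclic chain $(i)\Rightarrow(ii)\Rightarrow(iii)\Rightarrow(i)$, each step exploiting extensivity to convert a coprojection into a complemented-subobject decomposition, together with the fact that the trivial morphisms form an ideal. For $(i)\Rightarrow(ii)$ I would begin with a trivial $g\colon X\to Y$, factored as $g=mh$ with $h\colon X\to Z$ and $m\colon Z\to Y$ for some $Z\in\Cal Z$, and a coprojection $\beta\colon B\rightarrowtail Y$, so that $Y=B+B'$. Pulling the decomposition $Y=B+B'$ back along $m$, the universality of sums produces a decomposition $Z=Z_B+Z_{B'}$ in which $Z_B$ is exactly the pullback of $\beta$ along $m$; thus $Z_B$ is a complemented subobject of $Z$, and hypothesis $(i)$ gives $Z_B\in\Cal Z$. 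A pullback-pasting argument then identifies $W$ (the pullback of $\beta$ along $g=mh$) with the pullback of $Z_B\to Z$ along $h$, and exhibits $\tilde g\colon W\to B$ as the composite $W\to Z_B\to B$. Since this factors through $Z_B\in\Cal Z$, the morphism $\tilde g$ is trivial.

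For $(ii)\Rightarrow(iii)$, given $W\xrightarrow{g}B\xrightarrow{\beta}Y$ with $\beta g$ trivial and $\beta$ a coprojection, I would form the pullback $P$ of $\beta$ along the trivial composite $\beta g$, with projections $p\colon P\to W$ and $q\colon P\to B$. Condition $(ii)$, with the trivial map $\beta g$ in the role of $g$ and the same coprojection $\beta$, tells us that $q$ is trivial. The pair $(\Id W,\,g)$ satisfies $\beta g\cdot\Id W=\beta\cdot g$ and hence induces a unique $s\colon W\to P$ with $ps=\Id W$ and $qs=g$. As $q$ is trivial and the trivial morphisms are closed under composition, $g=qs$ is trivial.

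For $(iii)\Rightarrow(i)$, suppose $X+Y\in\Cal Z$ and set $Z:=X+Y$. The coprojection $\iota_X\colon X\rightarrowtail Z$ has codomain in $\Cal Z$ and is therefore trivial; applying $(iii)$ to the factorisation $\iota_X=\iota_X\cdot\Id X$ shows that $\Id X$ is trivial, that is, $X$ is a retract of some $Z'\in\Cal Z$. Writing $\Id X=v u$ with $u\colon X\to Z'$ and $v\colon Z'\to X$, the map $u$ is a split monomorphism and $v$ a split epimorphism; since split monos and split epis are extremal, $X$ is both an extremal subobject and an extremal quotient of $Z'$. By Lemma~\ref{Lemma_1}(3), $X\in\Cal F$ and $X\in\Cal T$, so $X\in\Cal Z$; the symmetric argument with $\iota_Y$ gives $Y\in\Cal Z$.

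I expect the only delicate step to be the bookkeeping in $(i)\Rightarrow(ii)$: one must combine the universality of sums (to split $Z$ according to $Y=B+B'$) with the pullback-pasting lemma (to recognise the given $W$ as a pullback over $Z_B$) and then check that the induced factorisation of $\tilde g$ genuinely passes through the $\Cal Z$-object $Z_B$. The implications $(ii)\Rightarrow(iii)$ and $(iii)\Rightarrow(i)$ are essentially formal once the ideal property of trivial morphisms and Lemma~\ref{Lemma_1}(3) are available.
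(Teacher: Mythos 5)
Your proof is correct, and it differs from the paper's in one structurally interesting way. Your $(i)\Rightarrow(ii)$ is exactly the paper's argument: factor the trivial $g$ through $Z\in\Cal Z$, split $Z=Z_B+Z_{B'}$ by universality of sums, identify $W$ via pullback pasting, and conclude that $\tilde g$ factors through $Z_B\in\Cal Z$. Your $(iii)\Rightarrow(i)$ is also the paper's argument (trivial coprojection forces $\Id X$ trivial), except that the paper then simply asserts $X\in\Cal Z$ (implicitly using the standard fact, from \cite{FFG}, that an object whose identity is trivial is trivial), whereas you give a self-contained justification via Lemma~\ref{Lemma_1}(3): a retract of $Z'\in\Cal Z$ is an extremal quotient of $Z'\in\Cal T$ and an extremal subobject of $Z'\in\Cal F$, hence lies in $\Cal T\cap\Cal F$. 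The genuine divergence is in how $(iii)$ is reached: the paper proves $(i)\Rightarrow(iii)$ directly, running the factor-through-$\Cal Z$-and-pull-back argument a second time, while you derive $(ii)\Rightarrow(iii)$ purely formally --- pull $\beta$ back along the trivial composite $\beta g$, note that the cone $(\Id W, g)$ induces a section $s$ of the pullback, and write $g=qs$ with $q$ trivial by $(ii)$. This buys you a cyclic proof $(i)\Rightarrow(ii)\Rightarrow(iii)\Rightarrow(i)$ with only one substantive extensivity argument instead of the paper's two; moreover your $(ii)\Rightarrow(iii)$ step uses nothing about coprojections or about $\Cal Z$, so it isolates cleanly which implications are formal consequences of the ideal property and which actually need hypothesis $(i)$.
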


\begin{proof}$(i)\Rightarrow (ii)$
We can write $g \tilde{\beta}=hk$ where the domain of $h$ is a trivial object $Z \in \Cal Z$. We then get a commutative diagram
$$
\xymatrix{
W \ar[r]^{\tilde{\beta}} \ar[d]^{\tilde{k}} \ar@/^-1.5pc/[dd]_{\tilde{g}} & X \ar[d]^k \ar@/^1.5pc/[dd]^{g}\\
Z_1 \ar@{ >->}[r] \ar[d]^{\tilde{h}} & Z \ar[d]^h &\\
B  \ar@{ >->}[r]_{\beta} & Y 
}
$$
where the squares are pullbacks and $\tilde{h}\tilde{k}=\tilde{g}$. The assumption implies that $Z_1 \in \Cal Z$: it follows that $\tilde{g}$ is a trivial morphism.

$(ii)\Rightarrow (i)$ if $X+Y \in \Cal Z$, then the coprojection $\iota_1\colon X \rightarrowtail X+Y$ is a trivial morphism. Applying $(ii)$ to the pullback of the monomorphism $\iota_1$ with itself we get that $\Id X$ is a trivial morphism, hence $X \in \Cal Z$.

$(i)\Rightarrow (iii)$ We can write $g \beta=hk$ where the domain of $h$ is a trivial object $Z \in \Cal Z$. Thus we have a commutative diagram
$$
\xymatrix{
W \ar[dr]_g  \ar@/^2pc/[rr]^k \ar@{..>}[r] &  Z' \ar@{>->}[r]\ar[d] & Z\ar[d]^h \\
& B \ar@{>->}[r]_\beta & Y
}
$$
where the square is a pullback. The morphism $g$ is then trivial, since it factors through the trivial object $Z'$.

$(iii)\Rightarrow (i)$ if $X+Y \in \Cal Z$, then the morphism $\xymatrix{X \ar@{=}[r] & X \ar@{>->}[r]^-{\iota_1} & X+Y}$ is trivial, hence so is $\Id X$. It follows that $X \in \Cal Z$.\\
\end{proof}

\begin{rem}\label{magenta_remark}
It is clear that Lemma~\ref{magenta} also holds when ``coproduct coprojections'' are replaced by ``monomorphisms''. Thus, using the same arguments as above, we get that the following conditions are equivalent.
\begin{enumerate}
\item[(i)] $\Cal Z$ is closed under subobjects;
\item[(ii)] the ideal of trivial morphisms is stable under pullbacks along monomorphisms;
\item[(iii)] if a composite morphism $m g$ is trivial and $m$ is a monomorphism, then $g$ is trivial.
\end{enumerate}
We can also consider another distinguished class of monomorphisms, namely those monomorphisms that are $\Cal Z$-kernels. It is easy to check that if a composite morphism $k g$ is trivial and $k$ is a $\Cal Z$-kernel, then $g$ is trivial. In particular, we get the following result.
\end{rem}

\begin{prop}
The ideal of trivial morphisms is stable under pullbacks along $\Cal Z$-kernels.
\end{prop}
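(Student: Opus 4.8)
The plan is to deduce the statement from a cancellation property for $\Cal Z$-kernels, the one already announced in Remark~\ref{magenta_remark}, and then to feed it the relevant pullback square. So suppose we are given a pullback
$$
\xymatrix{
W \ar[r]^{\tilde{\beta}} \ar[d]_{\tilde{g}} & X \ar[d]^g\\
B \ar@{ >->}[r]_\beta & Y
}
$$
in which $\beta$ is the $\Cal Z$-kernel of some morphism $f\colon Y\to Y'$ and $g$ is trivial. By commutativity of the square, $\beta\tilde g=g\tilde\beta$; since the trivial morphisms form an ideal, $g$ being trivial makes $g\tilde\beta$, and hence $\beta\tilde g$, trivial. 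Everything therefore reduces to the cancellation statement: \emph{if $\beta$ is a $\Cal Z$-kernel and $\beta\tilde g$ is trivial, then $\tilde g$ is trivial}. Notice that only the commutativity of the square was used, so the conclusion holds for any commutative square with $\beta$ a $\Cal Z$-kernel, the pullback being merely a special case.

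To prove the cancellation I would first record two elementary facts. First, every $\Cal Z$-kernel is a monomorphism: this is immediate from the uniqueness clause in the universal property defining $\Cal Z$-kernels. Second, and crucially, every morphism whose domain lies in $\Cal Z$ is trivial, since it factors through its own domain via the identity. Now write the trivial morphism $\beta\tilde g$ as $\beta\tilde g=hc$ with $c\colon W\to Z$, $h\colon Z\to Y$ and $Z\in\Cal Z$. Applying the second fact to the morphism $fh\colon Z\to Y'$ shows that $fh$ is trivial; hence the universal property of the $\Cal Z$-kernel $\beta$ of $f$ provides a (unique) morphism $h'\colon Z\to B$ with $h=\beta h'$. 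Then $\beta\tilde g=hc=\beta(h'c)$, and since $\beta$ is a monomorphism we obtain $\tilde g=h'c$, which factors through $Z\in\Cal Z$ and is therefore trivial.

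The one step that needs a moment's thought is why the universal property of the $\Cal Z$-kernel may be invoked, i.e.\ why $fh$ is trivial; this is exactly the point where the observation that $h$ has its domain $Z$ in $\Cal Z$ does the work, and after that the argument is purely formal. I expect no further obstacle. It is worth stressing that, unlike the analogous statements in Lemma~\ref{magenta} and Remark~\ref{magenta_remark} which require $\Cal Z$ to be closed under (complemented) subobjects, here no such hypothesis is needed: the universal property carried by a $\Cal Z$-kernel supplies the factorization $h=\beta h'$ for free.
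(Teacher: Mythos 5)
Your proof is correct and takes essentially the same route as the paper: the paper deduces this proposition from the cancellation property recorded in Remark~\ref{magenta_remark} (if a composite $kg$ is trivial and $k$ is a $\Cal Z$-kernel, then $g$ is trivial), which it leaves as ``easy to check''. Your argument supplies exactly that check --- factor the trivial composite through an object $Z\in \Cal Z$, observe that $fh$ is trivial because its domain lies in $\Cal Z$, invoke the universal property of the $\Cal Z$-kernel to get $h=\beta h'$, and cancel the monomorphism $\beta$ --- and you correctly note that, unlike Lemma~\ref{magenta}, no closure hypothesis on $\Cal Z$ is required.
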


\section{Partial morphisms and the stable category}\label{section_STABLE}

From now on $\mathbb{C}$ will denote a lextensive category and $(\Cal T, \Cal F)$ will be a pretorsion theory in $\mathbb{C}$ such that the class of trivial objects $\Cal Z:=\Cal T \cap \Cal F$ (and hence the class of trivial morphisms) satisfies the following properties:

\begin{itemize}
\item[-]
Property $(1)$: $\Cal Z$ is closed in $\mathbb{C}$ under binary coproducts;

\item[-]
Property $(2)$: $\Cal Z$ is closed in $\mathbb{C}$ under complemented subobjects.
\end{itemize}

 Let $\mathbb{C}$ be a lextensive category. We define the \emph{category $\Par C$ of partial morphisms} in $\mathbb{C}$. The concept of partial morphisms has been largely studied in literature. Here, we define what is called ``the category of $\Cal M$-partial morphisms" in \cite{RR}, where $\Cal M$ is the class of complemented subobjects (see \cite{RR} and the references therein for more details about partial maps).
 The objects of $\Par C$ are the same as those in $\mathbb C$. A morphism $X\to Y$ in the category $\Par C$ is an equivalence class of pairs $(\alpha, f)$ where $\alpha \colon A\rightarrowtail X$ is a complemented subobject of $X$, $f$ is a morphism in $\mathbb{C}$ and two pairs $(\alpha, f)$ and $(\alpha',f')$ (with $\alpha'\colon A'\rightarrowtail X$ and $f'\colon A'\to Y$) are equivalent if there exists a (unique) isomorphism $u$ such that $\alpha=\alpha' u$ and $f=f' u$.
 
A morphism $(\alpha, f)\colon X\to Y$ in $\Par C$ is displayed as 
$$
\xymatrix{
& A \ar@{ >->}[dl]_{\alpha} \ar[dr]^{f} & \\ X \ar@{.>}[rr]_{(\alpha, f)}& & Y
}
$$

The composite $(\beta, g) \circ (\alpha, f)$ in $\Par C$ of two morphisms $(\alpha, f) \colon X \to Y$ and $(\beta, g) \colon Y\to Z$ in $\Par C$ is defined by the external part of the diagram
$$
\xymatrix@=15pt{
 & & A'  \ar[rd]^{f'} \ar@{ >->}[dl]_{\alpha'} & & \\
& A \ar@{ >->}[dl]_{\alpha} \ar[dr]^{f}& & B \ar@{ >->}[dl]_{\beta} \ar[dr]^{g}& \\
X \ar@{.>}[rr]_{(\alpha,f)} & & Y \ar@{.>}[rr]_{(\beta,g)} & &  Z
}
$$
where the upper part is a pullback: in other words,
$$
(\beta, g) \circ (\alpha, f)= (\alpha \alpha', g f').
$$
Remark that the composition is well-defined, since $A'$ is a complemented subobject of both $A$ and $X$. By the elementary properties of pullbacks one sees that this composition is associative, and the identity in $\Par C$ of an object $X$ is given by the arrow
$$
\xymatrix{
& X \ar@{=}[dl]_{1} \ar@{=}[dr]^{1} & \\ X \ar@{.>}[rr]_1& & X
}
$$

\begin{rem}
 Notice that given a partial map $(\alpha, f)\colon X\to Y$, the subobject $\alpha \colon A \rightarrow X$ can only be determined up to isomorphism, being the representative of a class of monomorphisms $A \to X$ in $\mathbb{C}$. Nevertheless, it is easy to prove that the composition is independent of the choice of representatives.
\end{rem}

There is a functor
$$
I \colon \mathbb{C} \longrightarrow \Par C
$$
which is the identity on objects and such that, for any morphism $f \colon X\to Y$ in $\mathbb C$, $I(f) \colon X\to Y$ in $\Par C$ is defined by
$$
\xymatrix{
& X \ar@{=}[dl]_{1} \ar[dr]^{f} & \\
X \ar@{.>}[rr]_{I(f)}& & Y
}
$$

\noindent { \bf Convention.}
 From now on, an arrow $\xymatrix{\ar@{ >->}[r] &  }$ will always denote a complemented subobject, whereas an arrow $\xymatrix{ \ar[r] & }$ will be a morphism in $\mathbb{C}$.\\

The category $\Par C$ is pointed with zero object given by the initial object $0$ of $\mathbb{C}$. Indeed, since the initial object 0 of $\mathbb{C}$ is strict, it is easy to see that for any object $X$ the unique morphisms in $\Par C$ from $X$ to $0$ and from $0$ to $X$ are 
 $$\xymatrix{
\ar@{}[drrrrrrr]|{\mbox{and}} & 0 \ar@{ >->}[dl]_{} \ar@{=}[dr]^{} & & & & & 0 \ar[dr]_{} \ar@{=}[dl]_{} & \\
 X \ar@{.>}[rr]_{\omega_X}& & 0 & & & 0 \ar@{.>}[rr]_{\alpha_X}& & X,
}
$$
respectively.

 Starting from the category $\Par C$ of partial morphisms of $\mathbb{C}$ we are now going to define a quotient category of $\Par C$, that we shall call the \emph{stable category}.

\begin{defn}
A \emph{congruence diagram} in $\Par C$ is a diagram in $\mathbb{C}$ of the form
\begin{equation}\label{C-diagram}  
\xymatrix@=35pt{
C_1^c \ar@{ >->}[rr]^{\gamma_1^c } & & A_1  \ar@{ >->}[ld]^{\alpha_1} \ar[dr]^{f_1}  & & \\
C \ar@{ >->}[drr]_{\gamma_2} \ar@{ >->}[rru]^{\gamma_1 } \ar@{ >->}[r]_{\gamma_0} & X   &   & Y \\
C_2^c \ar@{ >->}[rr]_{\gamma_2^c}  & & A_2 \ar@{ >->}[ul]_{\alpha_2} \ar[ru]_{f_2} & &
 }
\end{equation}
where:
\begin{itemize}
\item[-]
the two triangles commute;
\item[-]
$A_1=C+C_1^c$ and $A_2=C+C_2^c$;
\item[-]
$f_1  \gamma_1 = f_2 \gamma_2$;
\item[-]
$f_1 {\gamma_1}^c$ and $f_2 {\gamma_2}^c$ are both trivial morphisms in $\mathbb C$.
\end{itemize}

Given two parallel morphisms $(\alpha_1, f_1)$ and $(\alpha_2, f_2)$ in $\Par C$, depicted as
$$
\xymatrix{
\ar@{}[drrrrrrr]|{\mbox{and}} & A_1 \ar@{ >->}[dl]_{\alpha_1} \ar[dr]^{f_1} & & & & & A_2 \ar@{ >->}[dl]_{\alpha_2} \ar[dr]^{f_2} & \\
 X \ar@{.>}[rr]_{} & & Y & & & X \ar@{.>}[rr]_{}& & Y,
 }
$$
one says that they are \emph{equivalent}, and writes $(\alpha_1, f_1) \sim (\alpha_2, f_2)$, if there is a congruence diagram of the form \eqref{C-diagram}.
\end{defn}

\begin{prop}\label{cong} 
 The relation $\sim$ defined above is a congruence on $\Par C$, i.e. an equivalence relation which is also compatible with the composition in $\Par C$.
\end{prop}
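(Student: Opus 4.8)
The plan is to check in turn that $\sim$ is reflexive, symmetric and transitive on each hom-set, and then that it is compatible with composition on both sides. Throughout I shall use that in a lextensive category the complemented subobjects of any object form a Boolean algebra (by the Remark of Section~1 they correspond to the morphisms into $1+1$, and finite products distribute over sums), so that finite meets of complemented subobjects exist and are again complemented, complemented subobjects compose (if $A=C+C^{c}$ inside $X=A+A^{c}$ then $X=C+C^{c}+A^{c}$), and taking preimages along an arbitrary morphism is a morphism of Boolean algebras, hence preserves complements and the decompositions $A=C+C^{c}$. Reflexivity of $\sim$ is immediate: for $(\alpha,f)\colon X\to Y$ with $\alpha\colon A\rightarrowtail X$ take $C=A$, $\gamma_{0}=\alpha$, $A_{1}=A_{2}=A$, $\gamma_{1}=\gamma_{2}=\Id A$ and $C_{1}^{c}=C_{2}^{c}=0$; the two complementary maps factor through the initial object $0\in\Cal Z$ (Lemma~\ref{Lemma_1}(2)) and are therefore trivial. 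Symmetry is built into the shape of \eqref{C-diagram}, since exchanging its two rows turns a witness for $(\alpha_{1},f_{1})\sim(\alpha_{2},f_{2})$ into one for the reverse; independence of the chosen representatives of the partial maps is seen by absorbing the connecting isomorphisms into $\gamma_{0},\gamma_{1},\gamma_{2}$.

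For transitivity, suppose $(\alpha_{1},f_{1})\sim(\alpha_{2},f_{2})$ through a congruence diagram with core $C$ and complements $C_{1}^{c},C_{2}^{c}$, and $(\alpha_{2},f_{2})\sim(\alpha_{3},f_{3})$ through a second one with core $D$ and complements $D_{2}^{c},D_{3}^{c}$. Regarding $C,D$ as complemented subobjects of $X$, I set $E:=C\cap D$, which is again complemented in $X$ and lies below both $A_{1}$ and $A_{3}$. Restricting $f_{1}|_{C}=f_{2}|_{C}$ and $f_{2}|_{D}=f_{3}|_{D}$ to $E$ gives $f_{1}|_{E}=f_{3}|_{E}$, so $E$ is a legitimate common core. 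In the Boolean algebra of complemented subobjects of $A_{1}$ the complement of $E$ decomposes as $C_{1}^{c}+(C\cap D^{c})$; on $C_{1}^{c}$ the map $f_{1}$ is trivial by hypothesis, while on $C\cap D^{c}\le D_{2}^{c}$ it agrees with $f_{2}$ and hence is trivial again. By Lemma~\ref{lemma_coproduct}(b) (Property~$(1)$) the induced map on $C_{1}^{c}+(C\cap D^{c})$ is trivial, and symmetrically for the complement of $E$ in $A_{3}$, which is $D_{3}^{c}+(D\cap C^{c})$ with $D\cap C^{c}\le C_{2}^{c}$. This yields the required congruence diagram with core $E$.

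For compatibility it suffices, since $\sim$ is reflexive, to prove separately that pre-composition and post-composition with a fixed partial map preserve $\sim$. For pre-composition by $(\delta,h)\colon W\to X$, I pull the whole diagram \eqref{C-diagram} back along $h$: preimages preserve the decompositions $A_{i}=C+C_{i}^{c}$, the pulled-back core maps agree because the preimage of $\gamma_{0}$ does not depend on the route through $A_{1}$ or $A_{2}$, and the complementary maps remain trivial since they are now pre-composed with the comparison morphisms and triviality is preserved under pre-composition. This step uses only extensivity. For post-composition by $(\beta,g)\colon Y\to Z$ with $\beta\colon B\rightarrowtail Y$, I form the composites by pulling $\beta$ back along $f_{1},f_{2}$, obtaining complemented $B_{i}\rightarrowtail A_{i}$ together with maps $\tilde f_{i}\colon B_{i}\to B$; the new core is $B_{C}$, the pullback of $\beta$ along the common restriction $f_{1}|_{C}=f_{2}|_{C}$ (which is $B_{i}\cap C$ viewed in either $A_{i}$), and the new complementary part is $B_{i}\cap C_{i}^{c}$.

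The only delicate point, and the main obstacle, is the triviality of these complementary maps. Pulling back along $f_{i}$ shows merely that $\beta\,\tilde f_{i}|_{B_{i}\cap C_{i}^{c}}=f_{i}|_{B_{i}\cap C_{i}^{c}}$ is trivial (being a restriction of the trivial $f_{i}\gamma_{i}^{c}$), i.e.\ the relevant map into $B$ becomes trivial only \emph{after} composing with the monomorphism $\beta$. Here Property~$(2)$ enters decisively: by Lemma~\ref{magenta}(iii), triviality of a composite $\beta\,(-)$ with $\beta$ a coprojection forces $\tilde f_{i}|_{B_{i}\cap C_{i}^{c}}$ itself to be trivial, whence $g\,\tilde f_{i}|_{B_{i}\cap C_{i}^{c}}$ is trivial because post-composition preserves triviality. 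On the core, $g\,\tilde f_{1}|_{B_{C}}=g\,\tilde f_{2}|_{B_{C}}$ since both factor through the same pullback comparison of $\beta$ along $f_{1}|_{C}=f_{2}|_{C}$. Assembling these data gives a congruence diagram between the two composites, which together with the reflexivity, symmetry and transitivity already established shows that $\sim$ is a congruence.
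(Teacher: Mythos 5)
Your proof is correct and follows essentially the same route as the paper's: reflexivity via $C=A_1$, transitivity via the intersected core $E=C\cap D$ with Property~$(1)$ handling the decomposed complements, pre-composition by pulling the congruence diagram back (pure extensivity), and post-composition using Lemma~\ref{magenta}(iii)/Property~$(2)$ to strip off the coprojection $\beta$ at exactly the same delicate step. Your packaging of complemented subobjects as a Boolean algebra is a mild reformulation of the same pullback computations, not a different argument.
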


\begin{proof}
 It is clear that the relation $\sim$ is symmetric. To see that it is reflexive it suffices to choose $C=A_1$ in diagram \eqref{C-diagram}, so that $C_1^c=0$ and $f_1 \gamma_1^c$ is trivial because $0 \in \Cal Z$.
 To see that $\sim$ is transitive consider three parallel morphisms in $\Par C$ such that $(\alpha_1, f_1) \sim (\alpha_2, f_2)$ and $(\alpha_2, f_2) \sim (\alpha_3, f_3)$. There are then a congruence diagram \eqref{C-diagram} and a congruence diagram 

\begin{equation}\label{C2-diagram}  
\xymatrix@=35pt{
D_2^c \ar@{ >->}[rr]^{\delta_2^c} & & A_2 \ar@{ >->}[ld]^{\alpha_2} \ar[dr]^{f_2}  & & \\
 D \ar@{ >->}[drr]_{\delta_3} \ar@{ >->}[rru]^{\delta_2 } \ar@{ >->}[r]_{\delta_0} & X   & & Y \\
 D_3^c \ar@{ >->}[rr]_{\delta_3^c}  & & A_3 \ar@{ >->}[ul]_{\alpha_3} \ar[ru]_{f_3} & &
}
\end{equation}
This implies at once  that $f_1$, $f_2$ and $f_3$ coincide on $E:=C\cap D$. It remains to prove that $f_1$ is trivial on the complement $E_1^c=(C\cap D_2^c)+C_1^c$ of $E$ in $A_1$ and $f_3$ is trivial on the complement $E_3^c=(C \cap D_2^c)+D_3^c$ of $E$ in $A_3$.

We already know that $f_1$ is trivial on $C_1^c$, hence on $(C_1^c \cap D)$. Moreover, $f_1$ coincides with $f_2$ on $C$ and $f_2$ is trivial on $D_2^c$, therefore $f_1$ is trivial on $C \cap D_2^c$. Thus $f_1$ is trivial on $E_1^c$ by Property~$(1)$. Similarly for $f_3$, and this proves the transitivity of the relation $\sim$.

Let us then show that the equivalence relation $\sim$ is compatible with the composition. Let us first prove that, given a morphism
$$
\xymatrix{
& B \ar@{ >->}[dl]_{\beta} \ar[dr]^{g} & \\ Y \ar@{.>}[rr]& & Z}
$$
and two parallel morphisms $( \alpha_1, f_1) \colon X \rightarrow Y$ and $( \alpha_2, f_2) \colon X \rightarrow Y$ in $\Par C$ (as above) such that $( \alpha_1, f_1) \sim ( \alpha_2 , f_2)$, then
$$
( \beta, g) \circ ( \alpha_1, f_1) \sim ( \beta, g) \circ ( \alpha_2, f_2).
$$
By using the same notations as above for the congruence diagram \eqref{C-diagram} making $( \alpha_1, f_1)$ and $(\alpha_2 , f_2)$ equivalent, one can consider the composition diagram
$$
\xymatrix@=20pt{
C_1^c\cap \tilde{A_1} \ar@{ >->}[rr] & &\tilde{A_1}  \ar@{ >->}[d] \ar[ddrr]^{\tilde{f_1}} & & & \\
& & A_1\ar[dr]^{f_1} \ar@{ >->}[dl]_{\alpha_1}  & & & \\
C \cap \tilde{A_1} \cap \tilde{A_2} \ar@{ >->}[r] \ar@{ >->}[rruu]  \ar@{ >->}[ddrr] & X \ar@<-1ex>@{.>}[rr] \ar@<1ex>@{.>}[rr] & &Y & B \ar[r]^g \ar@{ >->}[l]_\beta& Z \\
& & A_2 \ar[ru]_{f_2}  \ar@{ >->}[lu]^{\alpha_2} & & & \\
C_2^c \cap \tilde{A_2}   \ar@{ >->}[rr]  & & \tilde{A_2} \ar@{ >->}[u] \ar[rruu]_{\tilde{f_2}} & & &
}
$$
where the two right-hand quadrangles are pullbacks by construction (so that $ \tilde{A_1} = f_1^{-1}(B)$ and $ \tilde{A_2} = f_2^{-1}(B)$).

Notice that
$
C\cap \tilde{A_1}=(f_1 \gamma_1)^{-1}(B)$ and $C \cap \tilde{A_2}=(f_2 \gamma_2)^{-1}(B) 
$, thus the equality 
$f_1 \gamma_1 = f_2 \gamma_2$ implies that
$$
C \cap \tilde{A_1}  = C \cap \tilde{A_2} = C \cap \tilde{A_1} \cap \tilde{A_2}.
$$ 
From this we get that the two left-hand quadrangles commute, that $C_i^c \cap \tilde{A_i}$ is the complement of $C \cap \tilde{A_1} \cap \tilde{A_2}$ in $\tilde{A_i}$ for $i=1,2$, and that $f_1$ and $f_2$ coincide on $C \cap \tilde{A_1} \cap {\tilde{A_2}}$.
Moreover, the restriction of $f_1$ to $C_1^c \cap \tilde{A_1}$ is a trivial morphism, hence so is the restriction of $\tilde{f_1}$ to $C_1^c \cap \tilde{A _1}$ by Property~$(2)$. We then conclude that also the restriction of $g \tilde{f_1}$ to $C_1^c \cap \tilde{A _1}$ is trivial. Similarly the restriction of $g \tilde{f_2}$ to $C_2^c \cap \tilde{A _2}$ is trivial, hence $ ( \beta, g) \circ ( \alpha_1, f_1) \sim ( \beta, g) \circ ( \alpha_2, f_2)$.

Next consider 
$$
\xymatrix{
& B \ar@{ >->}[dl]_{\beta} \ar[dr]^{g} & \\ W \ar@{.>}[rr]_{}& & X
}
$$
and two morphisms $( \alpha_1, f_1) \colon X \rightarrow Y$ and $( \alpha_2, f_2) \colon X \rightarrow Y$ in $\Par C$ such that $( \alpha_1, f_1)  \sim  ( \alpha_2, f_2)$ with congruence diagram \eqref{C-diagram}.
Consider the diagram 
$$
\xymatrix{
\tilde{B}_1^c \ar@{ >->}[rr]^{\tilde{\beta}_1^c} \ar[dd]& & B_1 \ar@{ >->}[rr]^{\beta_1} \ar[dd]^{g_1} & & B \ar[dd]^g & & B_2 \ar@{ >->}[ll]_{\beta_2} \ar[dd]^{g_2} & & \tilde{B}_2^c \ar@{ >->}[ll]_{\tilde{\beta}_2^c} \ar[dd] \\
 & \tilde{B}_1 \ar@{>->}[ru] ^{\tilde{\beta}_1}\ar[dd] & & & & & &  \tilde{B}_2 \ar@{>->}[lu]_{\tilde{\beta}_2}\ar[dd] \\
C_1^c \ar@{>-}[r] &\ar@{->}[r] & A_1 \ar@{ >->}[rr]_{\alpha_1} & & X & & A_2\ar@{ >->}[ll]^{\alpha_2} & \ar@{->}[l] & A_2^c \ar@{>-}[l] \\
 & C\ar@{>->}[ru]_{\gamma_1} & & & & & & C \ar@{>->}[lu]^{\gamma_2}
}
$$
where all the squares are pullbacks, so that by extensivity $\tilde{B}_i^c$ is the complement of $\tilde{B}_i$ in $B_i$ for $i \in \{1,2\}$ and, moreover, $\tilde{B}_1=\tilde{B}_2=g^{-1}(C)$.
It is then easy to check that the diagram
\[
\xymatrix@=35pt{
\tilde{B}_1^c \ar@{ >->}[rr]^{\beta_1\tilde{\beta}_1^c} & & B_1\ar@{ >->}[ld]^{\beta \beta_1} \ar[dr]^{f_1g_1}  & & \\
g^{-1}(C) \ar@{ >->}[drr]_{\tilde{\beta}_2} \ar@{ >->}[rru]^{\tilde{\beta}_1} \ar@{ >->}[r] & W   & & Y \\
\tilde{B}_2^c \ar@{ >->}[rr]_{\beta_2\tilde{\beta}_2^c}  & & B_2 \ar@{ >->}[ul]_{\beta \beta_2} \ar[ru]_{f_2g_2} & &
}
\]
(where the arrow $\xymatrix{ g^{-1}(C) \ar@{ >->}[r] &W}$ is $\beta \beta_1\tilde{\beta}_1=\beta \beta_2\tilde{\beta}_2$) is a congruence diagram. Therefore $( \alpha_1, f_1) \circ ( \beta, g)  \sim ( \alpha_2, f_2) \circ  ( \beta, g)$, as desired.
\end{proof}

\begin{defn}
We denote by $\Stab C$ the quotient category of $\Par C$ by the congruence $\sim$ in Proposition \ref{cong}. If we write $\pi \colon \Par C \rightarrow \Stab C$ for the quotient functor, we also have a functor
$$
\Sigma = \pi \circ I \colon \mathbb{C} \rightarrow \Stab C
$$
obtained by precomposing $\pi$ with the inclusion functor $I \colon  \mathbb{C}\rightarrow \Par C$.
\end{defn}
 
{ \bf Convention.} From now on we shall write $\parm \alpha f \colon X \rightarrow Y$ for the morphism $\pi (\alpha, f) \colon X \rightarrow Y$ in $\Stab C$ (which is a congruence class of morphisms in $\Par C$ by definition). For ease of readability, if $f\colon X\to Y$ is a morphism in $\mathbb{C}$, we shall often keep writing $f$ also for the morphism $\Sigma(f)=\parm {1_X} f$ in $\Stab C$.
 
\begin{rem}\label{zero}
The category $\Stab C$ is pointed with zero object given by the initial object $0$ of $\mathbb C$. Moreover, the following results can be proved by using the same arguments as in \cite{BCG}.
\begin{enumerate}
\item
For an object $X \in \mathbb C$ the following conditions are equivalent:
\begin{enumerate}
\item
$X$ is a trivial object;
\item
$\Sigma (X) \cong 0$.
\end{enumerate}

\item
For a morphism $f \colon X \rightarrow Y$ in $\mathbb C$ the following conditions are equivalent:
\begin{enumerate}
\item
$f$ is a trivial morphism;
\item
$\Sigma (f)$ is a zero morphism in $\Stab C$.
\end{enumerate}
\item
For a morphism $\xymatrix@=30pt{X  \ar[r]^{\parm \alpha f} &  Y }$ in $\Stab C$ the following conditions are equivalent:
\begin{enumerate}
\item
$\parm \alpha f = 0$;
\item
$f$ is a trivial morphism in $\mathbb C$.
\end{enumerate}
\end{enumerate}
\end{rem}

\section{Preservation properties of $\Sigma$}

\begin{prop}\label{coproducts-preserved}
The functor $\Sigma \colon \mathbb C \rightarrow \Stab C$ preserves finite coproducts.
\end{prop}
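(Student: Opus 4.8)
The plan is to prove directly that, for any objects $X_1,X_2$ of $\mathbb{C}$, the object $\Sigma(X_1+X_2)$ together with the two morphisms $\Sigma(\iota_1),\Sigma(\iota_2)$ (images of the coprojections $\iota_\ell\colon X_\ell\to X_1+X_2$) is a coproduct of $\Sigma(X_1)$ and $\Sigma(X_2)$ in $\Stab C$. Establishing this universal property shows simultaneously that $\Stab C$ has these binary coproducts and that $\Sigma$ preserves them. The empty coproduct is immediate: since $0$ lies in $\Cal Z$ by Lemma~\ref{Lemma_1}(2), Remark~\ref{zero}(1) gives $\Sigma(0)\cong 0$, the zero object of the pointed category $\Stab C$.

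For existence, start from two morphisms $\phi_\ell=\parm{\alpha_\ell}{f_\ell}\colon X_\ell\to Z$ in $\Stab C$, with $\alpha_\ell\colon A_\ell\rightarrowtail X_\ell$ complemented and $f_\ell\colon A_\ell\to Z$. Because each $A_\ell$ is complemented in $X_\ell$, the coproduct $A_1+A_2$ is a complemented subobject of $X_1+X_2$, so $\psi:=\parm{\alpha_1+\alpha_2}{[f_1,f_2]}$ is a well-defined morphism $X_1+X_2\to Z$ in $\Stab C$. Computing $\psi\circ\Sigma(\iota_\ell)$ in $\Par C$ amounts to pulling the subobject $A_1+A_2\rightarrowtail X_1+X_2$ back along $\iota_\ell$; by disjointness and universality of sums this pullback is exactly $A_\ell\rightarrowtail X_\ell$, and $[f_1,f_2]$ restricts to $f_\ell$ on it. Hence $\psi\circ\Sigma(\iota_\ell)=\phi_\ell$ already in $\Par C$.

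For uniqueness I would first record that \emph{every} morphism $\psi=\parm{\alpha}{f}\colon X_1+X_2\to Z$ has this shape: pulling the complemented subobject $\alpha\colon A\rightarrowtail X_1+X_2$ back along the two coprojections and invoking extensivity yields a decomposition $A\cong A_1+A_2$ with each $A_\ell$ complemented in $X_\ell$, whence $f=[f_1,f_2]$ and $\psi\circ\Sigma(\iota_\ell)=\parm{\alpha_\ell}{f_\ell}$ as above. Thus it suffices to show: if $\psi$ and $\psi'$ have decompositions $(A_1+A_2,[f_1,f_2])$ and $(A_1'+A_2',[f_1',f_2'])$ with $\parm{\alpha_\ell}{f_\ell}=\parm{\alpha_\ell'}{f_\ell'}$ for $\ell=1,2$, then $\psi\sim\psi'$. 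Each such equality is witnessed by a congruence diagram supplying a complemented $C_\ell\rightarrowtail X_\ell$ with $A_\ell=C_\ell+P_\ell$, $A_\ell'=C_\ell+Q_\ell$, on which $f_\ell$ and $f_\ell'$ agree, while $f_\ell$ is trivial on $P_\ell$ and $f_\ell'$ is trivial on $Q_\ell$. I would then assemble a single congruence diagram by setting $C:=C_1+C_2\rightarrowtail X_1+X_2$: it is complemented, the complement of $C$ in $A_1+A_2$ is $P_1+P_2$ (and similarly on the primed side), the maps $[f_1,f_2]$ and $[f_1',f_2']$ agree on $C$ since they agree componentwise, and the restriction of $[f_1,f_2]$ to $P_1+P_2$ is the morphism induced by the two trivial maps $f_1|_{P_1}$ and $f_2|_{P_2}$, hence trivial by Property~$(1)$ (through Lemma~\ref{lemma_coproduct}); the same holds on the primed side. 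This produces a congruence diagram for $\psi\sim\psi'$, giving uniqueness.

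The step I expect to be the main obstacle is the decomposition of an arbitrary complemented subobject $A\rightarrowtail X_1+X_2$ into $A_1+A_2$ with control of the complements, which leans repeatedly on extensivity (disjointness and universality of sums); the remaining triviality bookkeeping in the amalgamated congruence diagram is routine but is precisely where the standing hypothesis Property~$(1)$ is indispensable.
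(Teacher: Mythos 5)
Your proposal is correct and follows essentially the same route as the paper: form the induced morphism $\parm{\alpha_1+\alpha_2}{[f_1,f_2]}$ on the complemented subobject $A_1+A_2\rightarrowtail X_1+X_2$, check the composites with the coprojections via the pullback description of composition in $\Par{C}$, and prove uniqueness by decomposing a competing morphism along the coprojections (extensivity) and gluing the two componentwise congruence diagrams into one, with Property~$(1)$ supplying triviality on the summed complements. The only cosmetic difference is that you compare two arbitrary decomposed morphisms symmetrically, whereas the paper compares the constructed morphism against an arbitrary one; the content is identical.
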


\begin{proof}
We already know that $\Sigma$ preserves the initial object, then it will suffice to prove that $\Sigma$ also preserves binary coproducts.
Consider then the coproduct
$$
\xymatrix{X\ar@{ >->}[r]^-{\iota_1} & X+Y & Y \ar@{ >->}[l]_-{\iota_2}}
$$
of two objects $X$ and $Y$ in $\mathbb C$ and two morphisms in $\Par C$
$$
\xymatrix{
\ar@{}[drrrrrrr]|{\mbox{and}} & A \ar@{>->}[dl]_{\alpha} \ar[dr]^{f} & & & & & B \ar@{>->}[dl]_{\beta} \ar[dr]^{g} & \\
 X \ar@{.>}[rr]_{(\alpha, f)} & & Z & & & Y \ar@{.>}[rr]_{(\beta, g)} & & Z,
 }
$$
We have to prove that there is a unique morphism $\parm \gamma h \colon X+Y \to Z$ in $\Stab C$ such that $\parm \gamma h \cdot \iota_1= \parm \alpha f$ and $\parm \gamma h \cdot \iota_2= \parm \beta g$ in $\Stab C$. First observe that $A+B$ is a complemented subobject of $X+Y$ whose complement is $A^c+B^c$, where $A^c$ and $B^c$ are the complements of $A$ and $B$ in $X$ and $Y$ respectively. Then, the following commutative diagram in $\mathbb{C}$
$$
\xymatrix{
  & &  A \ar@{ >->}[d]^{\alpha} \ar[ddrr]^f \ar@{>->}[ddll]_{\iota_1'}&  & & \\
  & &  X  \ar@{>->}[d]^{\iota_1} & & \\
  A+B \ar[rr]^{\alpha + \beta}  &
  & X+Y & & Z \\
 & & Y \ar@{>->}[u]_{\iota_2} & & & \\
 & & B \ar@{ >->}[u]_{\beta} \ar[rruu]_g \ar@{>->}[uull]^{\iota_2'} & & &
 }
$$
induces a unique morphism $h \colon A+B \to Z$ such that $h \cdot \iota_1' = f$ and $h \cdot \iota_2' = g$. Therefore we get a morphism
$$
\xymatrix{
  & A+B \ar@{ >->}[dl]_{\alpha+\beta} \ar[dr]^{h} & \\ X+Y \ar@{.>}[rr]_{(\alpha + \beta, h)} & & Z}
$$
in $\Par C$. To prove that
$
\parm {\alpha+\beta} h \cdot \iota_1 = \parm \alpha f
$
in $\Stab C$ it is suffices to look at the following congruence diagram
$$
\xymatrix{
0 \ar@{ >->}[rrr]& & & A \ar@{>->}[ld]^{\alpha} \ar[dr]^{\iota_1'} & & & \\
&&  X \ar@{=}[dl]  \ar@{ >->}[dr]^{\iota_1} &  & {A+B} \ar@{ >->}[dl]_{\alpha + \beta}  \ar[dr]^{h} & & \\
A \ar@{=}[drr]_{} \ar@{=}@/^/@<0.5ex>[rruur]^{} \ar@{ >->}[r]_{} & X & & X+Y   && Y\\
0 \ar@{ >->}[rr]_{}  & & A \ar@{ >->}[ul]_{\alpha}  \ar@/_/@<0.3ex>[urrr]_{f} & & & & 
}
$$
Similarly, one checks that $\parm {\alpha+\beta} h \cdot \iota_2 = \parm \beta g$.

To prove the uniqueness of the factorization consider another morphism
$$
\xymatrix{
& U \ar@{ >->}[dl]_u \ar[dr]^j & \\ X+Y \ar@{.>}[rr]_{(u, j)} & & Z
}
$$
with the same properties as $(\alpha+\beta, h)$, and the congruence diagram
$$
\xymatrix{
\tilde{C^c} \ar@{ >->}[rrr] & & & \tilde{A} \ar@{ >->}[ld]^{\tilde{\alpha}} \ar[dr]^{\tilde{\iota}_1}  & & & \\
& & X \ar@{=}[dl] \ar@{>->}[dr]^{\iota_1} &  & U \ar@{>->}[dl]_u \ar[dr]^{j} & & \\
C \ar@{ >->}[drr]_{} \ar@{ >->}@/^/@<0.5ex>[rruur]^{} \ar@{ >->}[r]_{} & X & & X+Y & & Z\\
C^c \ar@{ >->}[rr]_{}  & & A \ar@{ >->}[ul]_{\alpha}  \ar@/_/@<0.3ex>[urrr]_{f} &  & & & 
}
$$
where the upper square is a pullback, $C+\tilde{C^c}=\tilde{A}$ and $C+C^c=A$.
Symmetrically, we also have the congruence diagram 
$$
\xymatrix{
\tilde{D^c} \ar@{ >->}[rrr] & & & \tilde{B} \ar@{ >->}[ld]^{\tilde{\beta}} \ar[dr]_{\tilde{\iota}_2}  & & & \\
& & Y \ar@{=}[dl] \ar@{>->}[dr]^{\iota_2} &  & U \ar@{>->}[dl]_u \ar[dr]^j & & \\
D \ar@{ >->}[drr]_{} \ar@{ >->}@/^/@<0.5ex>[rruur]^{} \ar@{ >->}[r]_{} & Y & & X+Y & & Z\\
D^c \ar@{ >->}[rr]_{}  & & B \ar@{ >->}[ul]_{\beta}  \ar@/_/@<0.3ex>[urrr]_g &  & & & 
}
$$
Observe that the universality of coproducts implies that $U= \tilde{A}+\tilde{B}$ as we see in the following diagram where the two squares are pullbacks:
$$
\xymatrix{
\tilde{A} \ar[r]^{\tilde{\iota}_1} \ar[d]_{\tilde{\alpha}} & U \ar[d]^u & \tilde{B} \ar[l]_{\tilde{\iota}_2} \ar[d]^{\tilde{\beta}} \\
X \ar[r]_-{\iota_1} & X+Y & Y. \ar[l]^-{\iota_2}
}
$$
To prove the uniqueness of the factorization we now want to prove that the diagram
$$
\xymatrix{
C^c+ D^c \ar@{ >->}[rr] & & A+B  \ar@{ >->}[ld]^{\alpha + \beta} \ar[dr]^h  & & \\
C+D \ar@{ >->}[drr] \ar@{ >->}[rru] \ar@{ >->}[r] & X+Y & & Z \\
\tilde{C^c}+\tilde{D^c} \ar@{ >->}[rr] & & U \ar@{ >->}[ul]_u \ar[ru]_j & &
} 
$$
is a congruence diagram.
First observe that $A+B=(C+D)+(C^c+D^c)$ and $U=(C+D)+(\tilde{C^c}+\tilde{D^c})$. Then, we have that $h$ and $f$ coincide on $A$, therefore also on $C$. Moreover, $j$ and $f$ coincide on $C$, hence $h$ and $j$ coincide on $C$. Similarly, $h$ and $j$ coincide on $D$. 
 On the other hand $h$ and $f$ coincide on $C$ and $f$ is trivial on $C^c$, hence $h$ is trivial on $C^c$. Similarly, one sees that $h$ is trivial on $D^c$, and then $h$ is trivial on $C^c+D^c$ by Property~$(1)$. Finally, $j$ is trivial on $\tilde{C^c}$ and $\tilde{D^c}$, hence also on $\tilde{C^c}+\tilde{D^c}$.
\end{proof}



\begin{prop}\label{kernels_preserved}
The functor $\Sigma\colon \mathbb{C}\to \Stab C$ sends $\Cal Z$-kernels into kernels.
\end{prop}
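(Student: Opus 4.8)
The plan is to verify the kernel universal property directly in the pointed category $\Stab C$. Let $\varepsilon\colon X\to A$ be a $\Cal Z$-kernel of a morphism $f\colon A\to A'$ in $\mathbb{C}$. Since $\Sigma$ is the identity on objects, I must show that $\Sigma(\varepsilon)=\parm{1_X}{\varepsilon}$ is a kernel of $\Sigma(f)=\parm{1_A}{f}$ in $\Stab C$. Two preliminary observations will be used throughout. First, by functoriality $\Sigma(f)\circ\Sigma(\varepsilon)=\Sigma(f\varepsilon)$, and $f\varepsilon$ is trivial by the very definition of a $\Cal Z$-kernel, so this composite is zero by Remark~\ref{zero}. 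Second, a $\Cal Z$-kernel is automatically a monomorphism: if $\varepsilon a=\varepsilon b$, then $f(\varepsilon a)$ is trivial because the trivial morphisms form an ideal, and the uniqueness clause in the definition of $\varepsilon$ forces $a=b$.

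For the existence of factorizations, I would take an arbitrary morphism $\parm\alpha h\colon W\to A$ in $\Stab C$, with $\alpha\colon\hat W\rightarrowtail W$ a complemented subobject and $h\colon\hat W\to A$, satisfying $\Sigma(f)\circ\parm\alpha h=0$. Computing this composite in $\Par C$ gives $\parm\alpha{fh}$, which vanishes precisely when $fh$ is trivial, again by Remark~\ref{zero}. The universal property of the $\Cal Z$-kernel, applied to $h\colon\hat W\to A$, then produces a unique $h'\colon\hat W\to X$ with $h=\varepsilon h'$, and $\parm\alpha{h'}$ is the desired factorization, since $\Sigma(\varepsilon)\circ\parm\alpha{h'}=\parm\alpha{\varepsilon h'}=\parm\alpha h$.

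The delicate point, and the step I expect to be the main obstacle, is uniqueness, because it forces one to manipulate the congruence relation explicitly. Suppose $\parm\beta k\colon W\to X$ also satisfies $\Sigma(\varepsilon)\circ\parm\beta k=\parm\alpha h$, that is $\parm\beta{\varepsilon k}=\parm\alpha h$ in $\Stab C$. Unwinding this equality yields a congruence diagram of the form \eqref{C-diagram} with complemented-subobject data $C,\gamma_0,\gamma_1,\gamma_2,\gamma_1^c,\gamma_2^c$ witnessing $(\alpha,h)\sim(\beta,\varepsilon k)$. The key idea is that the very same skeleton witnesses $(\alpha,h')\sim(\beta,k)$: the triangle and decomposition conditions are untouched; from $h\gamma_1=\varepsilon k\gamma_2$ and $h=\varepsilon h'$ one obtains $\varepsilon h'\gamma_1=\varepsilon k\gamma_2$, and cancelling the monomorphism $\varepsilon$ gives $h'\gamma_1=k\gamma_2$; finally $h\gamma_1^c=\varepsilon(h'\gamma_1^c)$ and $(\varepsilon k)\gamma_2^c=\varepsilon(k\gamma_2^c)$ are trivial, so the cancellation property for $\Cal Z$-kernels recorded in Remark~\ref{magenta_remark} shows that $h'\gamma_1^c$ and $k\gamma_2^c$ are themselves trivial. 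Hence $\parm\alpha{h'}=\parm\beta k$. The crux is thus the recognition that triviality transfers across $\varepsilon$ precisely because $\varepsilon$ is a $\Cal Z$-kernel, which is what allows the congruence data of the given diagram to be transported unchanged to the lifted morphisms.
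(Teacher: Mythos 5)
Your proof is correct and follows essentially the same route as the paper's: show the composite with $\Sigma$ of the $\Cal Z$-kernel is zero, obtain the factorization from the universal property of the $\Cal Z$-kernel applied to the underlying morphism of a partial map killed by $\Sigma(f)$, and prove uniqueness by transporting the congruence diagram across the kernel, cancelling it as a monomorphism on the common part and using the triviality-cancellation property of $\Cal Z$-kernels (Remark~\ref{magenta_remark}) on the complements. The only difference is cosmetic: you spell out why a $\Cal Z$-kernel is a monomorphism, which the paper asserts in passing.
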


\begin{proof}
Let $k\colon K\to X$ be the $\Cal Z$-kernel of a morphism $f\colon X \to Y$ in $\mathbb{C}$. Since $fk$ is trivial in $\mathbb{C}$ we have that $\Sigma(fk)=0$ in $\Stab C$ by Remark~\ref{zero}. Now, let
$$
\xymatrix{
& A \ar@{ >->}[dl]_\alpha \ar[dr]^g & \\ N \ar@{.>}[rr]_{(\alpha, g)} & & K
}
$$
be a morphism in $\Par C$ such that $\Sigma(f)\cdot \parm \alpha g= \parm \alpha {fg}$ is the zero morphism in $\Stab C$. Again by Remark~\ref{zero}, $fg$ is a trivial morphism in $\mathbb{C}$ and therefore, by the universal property of $\Cal Z$-kernel, there exists a unique morphism $u\colon A \to K$ such that $k u=g$ in $\mathbb{C}$. It is immediate to check that $\Sigma(k)\cdot\parm \alpha u=\parm \alpha g$. Now, let
$$
\xymatrix{
& B \ar@{ >->}[dl]_\beta \ar[dr]^v & \\ N \ar@{.>}[rr]_{(\beta, v)} & & K
}
$$
be another morphism in $\Par C$ such that $\Sigma(k)\cdot\parm \beta v=\parm \alpha g$ in $\Stab C$. We need to prove that $\parm \alpha u =\parm \beta v$. We have a congruence diagram
$$
\xymatrix{
C_A^c \ar@{ >->}[rr]^{\gamma_A^c} & & A \ar@{ >->}[ld]^{\alpha} \ar[dr]_u  \ar[rrd]^{ku} & & & \\
C \ar@{ >->}[drr]_{\gamma_B} \ar@{ >->}[rru]^{\gamma_A} \ar@{ >->}[r]^\gamma & N & & K \ar[r]^k & X \ar[r]^f & Y\\
C_B^c \ar@{ >->}[rr] _{\gamma_B^c} & & B \ar@{ >->}[ul]_{\beta} \ar[ru]^v \ar[rru]_{kv}& & &
} 
$$
where $ku \gamma_A=kv\gamma_B$ and both $ku\gamma_A^c$ and $kv\gamma_B^c$ are trivial morphisms. Since $k$ is a $\Cal Z$-kernel (and in particular a monomorphism) we have that $u \gamma_A=v\gamma_B$ and both $u\gamma_A^c$ and $v\gamma_B^c$ are trivial as well (see Remark~\ref{magenta_remark}).
\end{proof}

\begin{lem}\label{cokernels_coproducts}
Assume we have a commutative diagram
$$\xymatrix{
A_1 \ar[r]^{g_1}\ar@{>->}[d]_{\alpha_1} & B_1 \ar[r]^{q_1} \ar@{>->}[d]_{\beta_1} & Q_1 \ar@{>->}[d]^{\gamma_1}\\
X \ar[r]^g & Y \ar[r]^q & Q \\
A_2 \ar[r]_{g_2} \ar@{>->}[u]^{\alpha_2} & B_2 \ar[r]_{q_2} \ar@{>->}[u]^{\beta_2} & Q_2 \ar@{>->}[u]_{\gamma_2}
}$$
where all columns are sums (hence, all squares are pullbacks). If $q_i=\Cal Z \mbox{-coker}(g_i)$ for $i=1,2$, then $q=\Cal Z \mbox{-coker}(g)$.
\end{lem}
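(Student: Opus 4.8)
The plan is to verify directly the two defining conditions of a $\Cal Z$-cokernel for $q$: first that the composite $qg$ is $\Cal Z$-trivial, and second that $q$ satisfies the required universal property against morphisms $\lambda$ with $\lambda g$ trivial. Throughout, the decompositions $X=A_1+A_2$, $Y=B_1+B_2$ and $Q=Q_1+Q_2$ read off from the columns, together with the commutativities $g\alpha_i=\beta_i g_i$ and $q\beta_i=\gamma_i q_i$ given by the squares, are precisely what lets everything reduce componentwise to the two hypotheses $q_i=\Cal Z\mbox{-coker}(g_i)$.

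For the first condition I would compute, for $i=1,2$, that $qg\alpha_i=q\beta_i g_i=\gamma_i q_i g_i$. Since $q_i$ is a $\Cal Z$-cokernel of $g_i$, the morphism $q_i g_i$ is trivial, hence so is $qg\alpha_i$. As $X=A_1+A_2$, the morphism $qg$ thus restricts to a trivial morphism along each coprojection, and Lemma~\ref{lemma_coproduct}---applicable thanks to Property $(1)$, the closure of $\Cal Z$ under binary coproducts---yields that $qg$ itself is trivial.

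For the universal property I would take any $\lambda\colon Y\to W$ with $\lambda g$ trivial and set $\lambda_i:=\lambda\beta_i$. Because $\lambda g$ factors through a trivial object, so does its precomposite $\lambda g\alpha_i=\lambda\beta_i g_i=\lambda_i g_i$; hence each $\lambda_i g_i$ is trivial and the universal property of $q_i$ provides a unique $\bar\lambda_i\colon Q_i\to W$ with $\bar\lambda_i q_i=\lambda_i$. Using $Q=Q_1+Q_2$, these assemble into a unique $\bar\lambda\colon Q\to W$ with $\bar\lambda\gamma_i=\bar\lambda_i$, and the computation $\bar\lambda q\beta_i=\bar\lambda\gamma_i q_i=\bar\lambda_i q_i=\lambda_i=\lambda\beta_i$ together with $Y=B_1+B_2$ gives $\bar\lambda q=\lambda$. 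Uniqueness of $\bar\lambda$ follows the same pattern: any $\mu$ with $\mu q=\lambda$ satisfies $(\mu\gamma_i)q_i=\mu q\beta_i=\lambda_i$, so $\mu\gamma_i=\bar\lambda_i$ by the uniqueness clause for $q_i$, whence $\mu=\bar\lambda$ since $Q=Q_1+Q_2$.

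The argument is essentially a bookkeeping of coproduct coprojections, so I do not anticipate a genuine obstacle. The only place where a hypothesis beyond pure universality of sums is really needed is the triviality of $qg$, which rests on Property $(1)$ via Lemma~\ref{lemma_coproduct}; everything else is forced by the universal properties of the coproducts $Y$ and $Q$. I would also observe that I never use the vertical maps beyond the stated commutativities, so the argument is insensitive to the choice of representatives of the complemented subobjects involved.
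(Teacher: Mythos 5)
Your proof is correct and follows essentially the same componentwise strategy as the paper's: reduce everything along the sum decompositions $X=A_1+A_2$, $Y=B_1+B_2$, $Q=Q_1+Q_2$ and invoke the universal properties of $q_1,q_2$. If anything, you are slightly more thorough than the paper, which omits the verification that $qg$ is $\Cal Z$-trivial (the first clause of the $\Cal Z$-cokernel definition) that you correctly supply via Lemma~\ref{lemma_coproduct} and Property~$(1)$, and which settles uniqueness by observing that $q_1,q_2$ are epimorphisms rather than, as you do, by the uniqueness clause of their universal property---an equivalent step.
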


\begin{proof}
Let $h \colon Y \to W$ be a morphism such that $hg$ is trivial. Then for $i=1,2$, $hg\alpha_i=h \beta_i g_i$ is trivial and thus there is a unique $t_i\colon Q_i \to W$ such that $h \beta_i=t_i q_i$. The morphisms $t_1$ and $t_2$ induce a morphism $t\colon Q \to W$ such that $t q=h$. The uniqueness of the factorization follows from the fact that $q_1$ and $q_2$ are epimorphisms.
\end{proof}

\begin{prop}\label{cokernels_preserved}
Let $\parm \alpha g$ be a morphism in $\Stab C$ represented by a morphism in $\Par C$
$$
\xymatrix{
& A \ar@{ >->}[dl]_\alpha \ar[dr]^g & \\
X \ar@{.>}[rr]_{(\alpha, g)} & & Y
}
$$
and assume that for every complemented subobject $\xymatrix{B\ar@{ >->}[r]^\beta &Y}$ the induced morphism $g^{-1}(B)\to B$ has a $\Cal Z$-cokernel in $\mathbb{C}$. Then the cokernel of $\parm \alpha g$ exists in $\Stab C$ and
$$
\mbox{coker}(\parm \alpha g)= \Sigma(\Cal Z\mbox{-coker}(g)).
$$
\end{prop}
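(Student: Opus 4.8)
The plan is to set $q:=\Cal Z\mbox{-coker}(g)$, which exists by applying the hypothesis to the complemented subobject $\Id{Y}\colon Y\rightarrowtail Y$, and to prove that $\Sigma(q)\colon Y\to Q$ is the cokernel of $\parm{\alpha}{g}$ in $\Stab C$. That $\Sigma(q)\cdot\parm{\alpha}{g}=0$ is immediate: the composite in $\Par C$ is $\parm{\alpha}{qg}$, and $qg$ is trivial because $q$ is a $\Cal Z$-cokernel of $g$, so this class is $0$ by Remark~\ref{zero}.

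For the factorization, let $\parm{\delta}{h}\colon Y\to W$ satisfy $\parm{\delta}{h}\cdot\parm{\alpha}{g}=0$, with $\delta\colon D\rightarrowtail Y$ complemented and $h\colon D\to W$. Computing the composite shows it equals $\parm{\alpha\alpha_D}{hg_D}$, where $g_D\colon g^{-1}(D)\to D$ is the restriction of $g$; hence $hg_D$ is trivial by Remark~\ref{zero}. By hypothesis $g_D$ has a $\Cal Z$-cokernel $q_D\colon D\to Q_D$, and triviality of $hg_D$ yields a unique $t\colon Q_D\to W$ with $tq_D=h$. Writing $Y=D+D^c$ and applying Lemma~\ref{cokernels_coproducts} to the resulting decompositions identifies $Q$ with $Q_D+Q_{D^c}$ so that $Q_D$ is a complemented subobject of $Q$, with coprojection $\iota_{Q_D}$, and $q_D$ is the restriction of $q$ to $D$. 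Setting $\psi:=\parm{\iota_{Q_D}}{t}$, the pullback of $\iota_{Q_D}$ along $q$ is precisely $\delta$ with induced map $q_D$, so $\psi\cdot\Sigma(q)=\parm{\delta}{tq_D}=\parm{\delta}{h}$.

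I expect uniqueness to be the main obstacle, and the cleanest route is to prove that $\Sigma(q)$ is an epimorphism in $\Stab C$, which makes the factorization automatically unique. Two facts drive this. First, a $\Cal Z$-cokernel $e$ is an epimorphism in $\mathbb{C}$, and moreover if $se$ is trivial then $s$ is trivial: factoring $se=nm$ through a trivial object $Z\in\Cal Z$, the composite of $m$ with the morphism of which $e$ is the $\Cal Z$-cokernel has codomain $Z$, hence is trivial and factors through $e$; cancelling the epimorphism $e$ then exhibits $s$ as factoring through $Z$. Second, by Lemma~\ref{cokernels_coproducts}, the hypothesis, and the uniqueness of $\Cal Z$-cokernels, the restriction $q^{-1}(B)\to B$ of $q$ is again a $\Cal Z$-cokernel for every complemented subobject $B\rightarrowtail Q$.

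Given these, suppose $\mu\cdot\Sigma(q)=\nu\cdot\Sigma(q)$ for $\mu=\parm{a}{s}$ and $\nu=\parm{b}{r}$ in $\Stab C$, with $a\colon Q_1\rightarrowtail Q$ and $b\colon Q_2\rightarrowtail Q$. Unwinding the composites and the congruence witnessing this equality produces a complemented subobject $C\rightarrowtail Y$ contained in $q^{-1}(Q_1)\cap q^{-1}(Q_2)$, on which the two induced total maps agree and off which they are trivial. The delicate point, which is the heart of the argument, is to transport this congruence from $Y$ up to $Q$: since $\Sigma$ preserves finite coproducts (Proposition~\ref{coproducts-preserved}) and each restriction of $q$ is a $\Cal Z$-cokernel, one pushes $C$ forward along $q$ to a complemented subobject of $Q$ (legitimate because these restrictions are epimorphisms), converts ``trivial off $C$ in $Y$'' into ``trivial off its image in $Q$'' using the first fact above, and cancels the relevant restriction epimorphisms to obtain a congruence diagram in $Q$. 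This gives $\mu=\nu$, so $\Sigma(q)$ is epic and the factorization is unique.
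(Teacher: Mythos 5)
Your proposal is correct and takes essentially the same route as the paper's proof: the zero-composite and factorization-existence steps coincide, and your ``uniqueness via epimorphy of $\Sigma(q)$'' is exactly what the paper establishes by transporting the congruence diagram from $Y$ up to $Q$, using precisely your Fact 1 (the dual of Remark~\ref{magenta_remark}) and your Fact 2 (a consequence of Lemma~\ref{cokernels_coproducts} and uniqueness of $\Cal Z$-cokernels). The one point to tighten is the parenthetical justification of the pushforward: pushing $C$ forward along $q$ is legitimized not by the epimorphy of the restrictions but by defining $P$ as the $\Cal Z$-cokernel of $g^{-1}(C)\to C$ (which exists by hypothesis) and applying Lemma~\ref{cokernels_coproducts} to the decompositions $q^{-1}(Q_i)=C+C_i^c$, which exhibits $P$ as a complemented subobject of $Q_1$ and of $Q_2$ whose preimage under $q$ is $C$ --- this is exactly how the paper constructs its subobject $Q_1=Q_2$ of $Q'$ and $Q''$.
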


\begin{proof}
Let $q\colon Y \to Q$ be the $\Cal Z$-cokernel of $g$ (which exists by hypothesis). Since $qg$ is trivial in $\mathbb{C}$, we have that $\Sigma(q)\cdot \parm \alpha g = \parm \alpha {qg}=0$ in $\Stab C$ by Remark~\ref{zero}.

Now, let
$$
\xymatrix{
& B' \ar@{ >->}[dl]_{\beta'} \ar[dr]^g & \\
Y \ar@{.>}[rr]_{(\beta', h)} & & W
}
$$
be a morphism in $\Par C$ such that $\parm {\beta'} h \cdot \parm \alpha g=0$. We have $\parm {\beta'} h \cdot \parm \alpha g=\parm {\alpha\alpha'}{hg'}$, where $\alpha'$ and $g'$ are defined by the following pullback diagram
$$
\xymatrix{
A' \ar[r]^{g'} \ar@{ >->}[d]_{\alpha'} & B' \ar@{ >->}[d]^{\beta'}\\
A \ar[r]_{g}& {Y.}
}
$$
In particular, $hg'$ is trivial. Let $q'\colon B' \to Q'$ be the $\Cal Z$-cokernel of $g'$. There exists then a unique $w \colon Q'\to W$ such that $h=wq'$. Applying Lemma~\ref{cokernels_coproducts} we get that $\parm {\gamma'} w$ is a morphism in $\Stab C$ such that $\parm {\gamma'}w \cdot \Sigma(q)=\parm {\beta'} h$.
It remains to prove the uniqueness of the factorization. For this, consider another morphism in $\Par C$
$$
\xymatrix{
& Q'' \ar@{ >->}[dl]_{\gamma''} \ar[dr]^v & \\
Q \ar@{.>}[rr]_{(\gamma'', v)} & & W
}
$$
such that $\parm {\gamma'}w \cdot \Sigma(q)=\parm {\gamma''}v \cdot \Sigma(q)=\parm {\beta'} h$. This means that there is a congruence diagram
\begin{equation}\label{CYQW}
\xymatrix@=40pt{
C' \ar@{ >->}[rr]^{\varepsilon'} &   & B' \ar[r]^{q'} \ar@{>->}[dl]^{\beta'} & Q' \ar@{>->}[dl]^{\gamma'} \ar[dr]^w &   \\
C \ar@{ >->}[r]^{\delta} \ar@{ >->}[drr]_{\delta''} \ar@{ >->}[urr]^{\delta'} & Y \ar[r]^q & Q  &    & W \\ 
C'' \ar@{ >->}[rr]_{\varepsilon''} &   & B'' \ar[r]_{q''} \ar@{>->}[ul]_{\beta''} & Q'' \ar@{>->}[ul]_{\gamma''} \ar[ur]
_v &   
}
\end{equation}
where both the parallelograms are pullbacks and $C'$ (resp. $C''$) is the complement of $C$ in $B'$ (resp. in $B''$). Consider the following commutative diagram
$$
\xymatrix@=40pt{
A_1 \ar@{}[rd]|{(1)} \ar@{>->}[r]^{\alpha_1} \ar[d]_{g_1} & A' \ar@{}[rd]|{(2)} \ar@{>->}[r]^{\alpha'} \ar[d]^{g'} & A \ar@{}[rd]|{(3)} \ar[d]^g & A'' \ar@{}[rd]|{(4)} \ar@{>->}[l]_{\alpha''}  \ar[d]^{g''}& A_2 \ar@{>->}[l]_{\alpha_2} \ar[d]^{g_2}\\ 
C \ar@{}[rd]|{(5)}\ar[d]_{q_1}\ar@{>->}[r]^{\delta'} & B'\ar@{}[rd]|{(6)} \ar@{>->}[r]^{\beta'}  \ar[d]^{q'} & Y \ar@{}[rd]|{(7)} \ar[d]^{q} & B''\ar@{}[rd]|{(8)} \ar@{>->}[l]_{\beta''}  \ar[d]^{q''} & C \ar[d]^{q_2}  \ar@{>->}[l]_{\delta''} \\
Q_1 \ar@{>->}[r]_{\gamma_1} & Q' \ar@{>->}[r]_{\gamma'} & Q & Q'' \ar@{>->}[l]^{\gamma''} & Q_2 \ar@{>->}[l]^{\gamma_2}
}
$$
where
\begin{itemize}[leftmargin=*]
\item
the squares $(2)$, $(6)$ and  $(7)$ are the pullbacks already considered and $q'$ is the $\Cal Z$-cokernel of $g'$;
\item
the squares $(1)$, $(3)$ and $(4)$ are pullbacks by construction, and by Lemma~\ref{cokernels_coproducts}, $q''$ is the $\Cal Z$-cokernel of $g''$;
\item
the arrow $q_i$ is defined as the $\Cal Z$-cokernel of $g_i$ for $i=1,2$ and $\gamma_i$ is the morphism induced by the universal property of $\Cal Z$- cokernels;
\item
the squares $(5)$ and $(8)$ are pullbacks by Lemma~\ref{cokernels_coproducts}, and $Q_1$ (resp. $Q_2$) is a complemented subobject of $Q'$ (resp. $Q''$).
\end{itemize}

Notice that $\beta'\delta'=\beta''\delta''$ and since the rectangles $(1)+(2)$ and $(3)+(4)$ are pullbacks, there is no restriction in assuming that $A_1=A_2$. In particular, we can also assume that $Q_1=Q_2$, $q_1=q_2$ and $\gamma'\gamma_1=\gamma''\gamma_2$. Thus we have the following commutative diagram
$$
\xymatrix@=30pt{
D' \ar@{>->}[rr]^{\zeta'}&  & Q'\ar@{>->}[dl]^{\gamma'} \ar[dr]^w & \\
Q_1=Q_2 \ar@{ >->}[r] \ar@{ >->}[rrd]_{\gamma_2} \ar@{ >->}[rru]^{\gamma_1} & Q & & W\\
D'' \ar@{>->}[rr]_{\zeta''} & & Q'' \ar@{>->}[ul]_{\gamma''} \ar[ur]_v&
}
$$
where $D'$ (resp. $D''$) is the complement of $Q_1=Q_2$ in $Q'$ (resp. in $Q''$).
First observe that we have the equalities $v \gamma_2 q_2= v q'' \delta''= w q' \delta'= w \gamma_1 q_1$ and since $q_1=q_2$ is an epimorphism, we get $v \gamma_2= w \gamma_1$.

It remains to prove that $v$ and $w$ are trivial on $D'$ and $D''$, respectively. We first consider the pullbacks
$$
\xymatrix{
A_1 \ar@{}[rd]|{(1)} \ar@{>->}[r]^{\alpha_1} \ar[d]_{g_1} & A'  \ar[d]^{g'} & \ar@{>->}[l]_{\tilde{\alpha}_1} \tilde{A}_1 \ar[d]^{\tilde{g}_1}\\
C \ar@{}[rd]|{(5)} \ar@{>->}[r]^{\delta'} \ar[d]_{q_1} & B' \ar[d]^{q'} & C' \ar@{>->}[l]_{\varepsilon'}\ar[d]^{\tilde{q}_1}\\
Q_1 \ar@{>->}[r]_{\gamma_1} & Q' & D' \ar@{>->}[l]^{\zeta'}
}
$$
where the squares $(1)$ and $(5)$ are as above and $\tilde{q}_1$ is the $\Cal Z$-cokernel of $\tilde{g}_1$.
We already know from diagram~(\ref{CYQW}) that $g \varepsilon'=w q' \varepsilon'=w \zeta' \tilde{q}_1$ is trivial, and since $\tilde{q}_1$ is a $\Cal Z$-cokernel, we get that $w\zeta'$ is trivial as well. Similarly for $v \zeta''$.
\end{proof}

\begin{cor}\label{corollary_cokernel_preserved}
If $\mathbb{C}$ has all $\Cal Z$-cokernels, then the category $\Stab C$ has all cokernels and the functor $\Sigma \colon \mathbb{C}\to \Stab C$ sends $\Cal Z$-cokernels to cokernels.
\end{cor}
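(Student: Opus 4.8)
The plan is to derive both assertions directly from Proposition~\ref{cokernels_preserved}, by observing that the hypothesis appearing there is automatically satisfied once $\mathbb{C}$ is assumed to possess \emph{all} $\Cal Z$-cokernels. In other words, the genuine categorical work has already been done in that proposition, and the corollary is the specialization to the case in which the required $\Cal Z$-cokernels always exist.

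First I would establish the existence of all cokernels in $\Stab C$. Let $\parm \alpha g \colon X \to Y$ be an arbitrary morphism of $\Stab C$, represented by a partial morphism $(\alpha, g)$ with $\alpha \colon A \rightarrowtail X$ a complemented subobject and $g \colon A \to Y$ a morphism in $\mathbb{C}$. For any complemented subobject $\beta \colon B \rightarrowtail Y$, the pullback $g^{-1}(B) \to B$ is an ordinary morphism of $\mathbb{C}$; since by assumption $\mathbb{C}$ has all $\Cal Z$-cokernels, this morphism has a $\Cal Z$-cokernel. Thus the hypothesis of Proposition~\ref{cokernels_preserved} is met for \emph{every} morphism of $\Stab C$, and we conclude that each such morphism admits a cokernel, given explicitly by $\Sigma(\Cal Z\mbox{-coker}(g))$. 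Hence $\Stab C$ has all cokernels.

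For the second assertion I would let $q \colon Y \to Q$ be the $\Cal Z$-cokernel of a morphism $f \colon X \to Y$ in $\mathbb{C}$, and recall that $\Sigma(f) = \parm {1_X} f$, where $1_X \colon X \rightarrowtail X$ is the (complemented) identity subobject with complement the initial object. Applying Proposition~\ref{cokernels_preserved} to the morphism $\parm {1_X} f$---whose hypothesis is again fulfilled, since $\mathbb{C}$ has all $\Cal Z$-cokernels---yields $\mbox{coker}(\Sigma(f)) = \Sigma(\Cal Z\mbox{-coker}(f)) = \Sigma(q)$. This says precisely that $\Sigma$ transforms the $\Cal Z$-cokernel $q$ of $f$ into the cokernel of $\Sigma(f)$ in $\Stab C$, which is the desired preservation property.

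I expect no substantial obstacle in this argument: the only point requiring attention is the verification that the pullback hypothesis of Proposition~\ref{cokernels_preserved} is vacuously satisfied under the blanket assumption that $\mathbb{C}$ has all $\Cal Z$-cokernels, and this is immediate because $g^{-1}(B) \to B$ is always a morphism of $\mathbb{C}$. The corollary thus follows formally from the preceding proposition.
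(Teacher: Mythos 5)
Your proposal is correct and matches the paper's intent exactly: the paper states this corollary without proof precisely because it is the immediate specialization of Proposition~\ref{cokernels_preserved} to the case where all $\Cal Z$-cokernels exist, which is what you verify. Your two steps (the hypothesis on $g^{-1}(B)\to B$ being automatic, and applying the proposition to $\Sigma(f)=\parm{1_X}{f}$ to get preservation) are the expected filling-in of that routine argument.
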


\begin{thm}
Let $(\Cal T, \Cal F)$ be a pretorsion theory in a lextensive category $\mathbb{C}$. Assume that $\Cal Z:=\Cal T\cap \Cal F$ is closed under complemented subobjects and that $\mathbb{C}$ has all $\Cal Z$-kernels and all $\Cal Z$-cokernels. Then the stable category $\Stab C$ is well-defined and the quotient functor
$$
\Sigma\colon \mathbb{C}\to \Stab C
$$
sends $\Cal Z$-kernels to kernels and $\Cal Z$-cokernels to cokernels. In particular, it sends $\Cal Z$-short exact sequences to short exact sequences.
\end{thm}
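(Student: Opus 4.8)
The plan is to reduce the theorem to the results already established in Sections~\ref{section_STABLE} and the present one, whose proofs all rest on the two standing hypotheses that $\Cal Z$ is closed under binary coproducts (Property~$(1)$) and under complemented subobjects (Property~$(2)$). Here Property~$(2)$ is assumed outright, so the only genuine work is to check that Property~$(1)$ is forced by the remaining hypothesis that all $\Cal Z$-kernels exist; once this is secured, the three assertions follow by direct citation.

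To derive Property~$(1)$, I would take $Z_1,Z_2\in\Cal Z$ and consider the two coprojections $\iota_\ell\colon Z_\ell\to Z_1+Z_2$. Each $\iota_\ell$ is trivial, since it factors through $Z_\ell\in\Cal Z$, and the morphism they induce on the coproduct is exactly $\Id{Z_1+Z_2}$. By hypothesis this identity has a $\Cal Z$-kernel, so Lemma~\ref{lemma_coproduct}(a) applies and tells us that $\Id{Z_1+Z_2}$ is trivial. A factorization $\Id{Z_1+Z_2}=hk$ through an object $Z\in\Cal Z$ then exhibits $Z_1+Z_2$ as a retract of $Z$: the retraction $h$ is a split, hence extremal, epimorphism and the section $k$ a split, hence extremal, monomorphism, so by Lemma~\ref{Lemma_1}(3) we obtain $Z_1+Z_2\in\Cal T$ and $Z_1+Z_2\in\Cal F$ simultaneously, that is $Z_1+Z_2\in\Cal Z$. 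This establishes Property~$(1)$, and note that it uses only the existence of $\Cal Z$-kernels, not Property~$(2)$.

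With both standing hypotheses now in force, the conclusions are immediate. Proposition~\ref{cong} shows that $\sim$ is a congruence, so $\Stab C$ is well defined; Proposition~\ref{kernels_preserved} gives that $\Sigma$ sends $\Cal Z$-kernels to kernels; and, since all $\Cal Z$-cokernels are assumed to exist, Corollary~\ref{corollary_cokernel_preserved} gives that $\Sigma$ sends $\Cal Z$-cokernels to cokernels. For the final clause, I would recall that a $\Cal Z$-short exact sequence $T_X\overset{f}{\to}X\overset{g}{\to}F_X$ is by definition one in which $f$ is the $\Cal Z$-kernel of $g$ and $g$ the $\Cal Z$-cokernel of $f$; applying the two preservation statements yields $\Sigma(f)=\ker\Sigma(g)$ and $\Sigma(g)=\operatorname{coker}\Sigma(f)$, which is precisely a short exact sequence in the pointed category $\Stab C$. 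The only non-routine step is the derivation of Property~$(1)$, and I expect it to be the main obstacle: the crux is to recognize that the existence of $\Cal Z$-kernels is exactly the hypothesis needed to invoke Lemma~\ref{lemma_coproduct}(a) on the identity of $Z_1+Z_2$, after which the retract and extremality argument collapses the coproduct into $\Cal Z$.
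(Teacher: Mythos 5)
Your proposal is correct and follows essentially the same route as the paper, whose entire proof consists of citing Lemma~\ref{lemma_coproduct}(a), Proposition~\ref{kernels_preserved} and Corollary~\ref{corollary_cokernel_preserved}. Your explicit derivation of Property~$(1)$ — applying Lemma~\ref{lemma_coproduct}(a) to the identity of $Z_1+Z_2$ (which has a $\Cal Z$-kernel by hypothesis) and then using the split-epi/split-mono extremality argument with Lemma~\ref{Lemma_1}(3) to conclude $Z_1+Z_2\in\Cal T\cap\Cal F$ — is a correct and welcome filling-in of the step the paper leaves implicit, namely how the existence of all $\Cal Z$-kernels substitutes for the standing coproduct-closure assumption of Section~\ref{section_STABLE}.
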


\begin{proof}
It follows from Lemma~\ref{lemma_coproduct}(a), Proposition~\ref{kernels_preserved} and Corollary~\ref{corollary_cokernel_preserved}.
\end{proof}

{
\section{Universal property for hereditary pretorsion theories}
}
In order to prove the universal property of the functor $\Sigma \colon \mathbb{C}\to \Stab C$ extending the one described in \cite{BCG2} we need to assume that the class $\Cal T$ of torsion objects is closed under complemented subobjects.
 Under this assumption we automatically get that also $\Cal Z$ is closed under complemented subobjects, thanks to Proposition~\ref{F_closed_complemented_subobjects}. Moreover, the arguments used in the proof of Lemma~\ref{Lemma_1}(6) also show that for every complemented subobject $\alpha\colon A \rightarrowtail X$ of an object $X$ in $\mathbb{C}$, the left-side square in the commutative diagram
\begin{equation}\label{diagram_1}
\xymatrix{
T(A) \ar[r]^{\varepsilon_A}\ar[d]_{T(\alpha)} & A \ar[r]^{\eta_A} \ar@{>->}[d]^\alpha & F(A) \ar[d]^{F(\alpha)}\\
T(X) \ar[r]_{\varepsilon_X} & X \ar[r]_{\eta_X} & F(X)
}
\end{equation}
is a pullback. Thus we can consider the commutative diagram
$$
\xymatrix{
T(A) \ar[rr]^{\varepsilon_A}\ar@{ >->}[d]_{T(\alpha)} & & A \ar[rr]^{\eta_A} \ar@{ >->}[d]^\alpha & & F(A) \ar@{ >->}[d]^{\iota_1}\\
T(X) \ar[rr]^{\varepsilon_X} & & X \ar[rr]^-{\eta_A+\eta_{A'}} & & {F(A)+F(A')}\\
T(A') \ar[rr]_{\varepsilon_{A'}}\ar@{ >->}[u]^{T(\alpha')} & & A' \ar[rr]_{\eta_{A'}} \ar@{ >->}[u]_{\alpha'} & & F(A') \ar@{ >->}[u]_{\iota_2}
}
$$
where the three coloumns are sums. From Lemma~\ref{cokernels_coproducts} it follows that $\eta_A+\eta_{A'}$ is the $\Cal Z$-cokernel of $\varepsilon_X$, hence $F(A)+F(A')=F(X)$. In particular, if $A,A' \in \Cal F$, then $A+A'=F(A)+F(A')=F(A+A')$, and therefore $\Cal F$ is closed under coproducts. Thus, we have the following result.

\begin{prop}\label{proposition_TFZ_closed}
Let $(\Cal T, \Cal F)$ be a pretorsion theory in a lextensive category $\mathbb{C}$ and assume that $\Cal T$ is closed under complemented subobjects in $\mathbb{C}$. Then the three subcategories $\Cal T, \Cal F$ and $\Cal Z$ are  closed in $\mathbb{C}$ under complemented subobjects and coproducts.
\end{prop}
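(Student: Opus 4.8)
The plan is to assemble the six closure statements (under complemented subobjects and under coproducts, for each of $\Cal T$, $\Cal F$, $\Cal Z$) from the facts already established, observing that only one of them requires genuine work while the rest follow either from the hypothesis or formally from $\Cal Z=\Cal T\cap\Cal F$.

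First I would dispatch closure under complemented subobjects. For $\Cal T$ this is exactly the standing hypothesis. For $\Cal F$ it is the content of Proposition~\ref{F_closed_complemented_subobjects}, which holds unconditionally in any lextensive category. Since $\Cal Z=\Cal T\cap\Cal F$, closure of $\Cal Z$ is then immediate: if $X+Y\in\Cal Z$ then $X+Y$ lies in both $\Cal T$ and $\Cal F$, so by the two preceding facts each summand lies in both subcategories, hence in $\Cal Z$.

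Next I would handle closure under coproducts. For $\Cal T$ this is already recorded as a special case of Lemma~\ref{Lemma_1}(7), a coreflective subcategory being closed under all colimits existing in $\mathbb{C}$. The substantive point is closure of $\Cal F$, and here I would invoke the identity $F(A+A')=F(A)+F(A')$, valid for any decomposition $X=A+A'$, which was derived in the discussion preceding the statement. Specializing to $A,A'\in\Cal F$ yields $A+A'=F(A)+F(A')=F(A+A')\in\Cal F$, so $\Cal F$ is closed under coproducts. Finally, closure of $\Cal Z$ under coproducts follows at once from the corresponding facts for $\Cal T$ and $\Cal F$ together with $\Cal Z=\Cal T\cap\Cal F$.

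I expect the only real obstacle to be the closure of $\Cal F$ under coproducts, the other five statements being either the hypothesis or formal consequences of it. The crux is the equality $F(A+A')=F(A)+F(A')$, which rests on two ingredients made available by the hypothesis that $\Cal T$ is closed under complemented subobjects: first, that the left-hand square of diagram~\eqref{diagram_1} is a pullback for every complemented subobject $\alpha\colon A\rightarrowtail X$ (an application of the argument used for Lemma~\ref{Lemma_1}(6)); and second, that Lemma~\ref{cokernels_coproducts}, applied to the $3\times 3$ diagram formed from the two complementary summands, identifies $\eta_A+\eta_{A'}$ as the $\Cal Z$-cokernel of $\varepsilon_X$ and hence forces $F(X)=F(A)+F(A')$. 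Once these are in place the remaining bookkeeping is routine, so the proof of the proposition itself reduces to citing these results in the order above.
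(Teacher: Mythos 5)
Your proof is correct and takes essentially the same route as the paper: the paper's own argument is exactly the discussion preceding the statement, which derives $F(A+A')=F(A)+F(A')$ from the pullback property of the left-hand square of diagram~\eqref{diagram_1} (via the argument of Lemma~\ref{Lemma_1}(6)) together with Lemma~\ref{cokernels_coproducts}, and handles the remaining closures just as you do (the hypothesis, Proposition~\ref{F_closed_complemented_subobjects}, Lemma~\ref{Lemma_1}(7), and the identity $\Cal Z=\Cal T\cap\Cal F$). You also correctly singled out the closure of $\Cal F$ under coproducts as the only substantive point.
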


We now recall the definition of torsion theory functor introduced in \cite{BCG2}.

\begin{defn}\cite{BCG2}\label{tt-functor}
 Let $({\mathbb A}, {\mathcal T},  {\mathcal F})$ be a category $\mathbb A$ with a given pretorsion theory $({\mathcal T},  {\mathcal F})$ in $\mathbb A$. If $({\mathbb B}, {\mathcal T'},  {\mathcal F'})$ is a pointed category $\mathbb B$ with a given torsion theory $({\mathcal T'},  {\mathcal F'})$ in it, we say that a functor $G \colon \mathbb A \rightarrow \mathbb B$ is a \emph{torsion theory functor} if the following two properties are satisfied:
 \begin{enumerate}
 \item $G(A) \in {\mathcal T}'$ for any $A \in \mathcal T$, $G(B) \in {\mathcal F}'$ for any $B \in \mathcal F$;
 \item if $T(A) \rightarrow A \rightarrow F(A)$ is the canonical short $\mathcal Z$-exact sequence associated with $A$ in the pretorsion theory $({\mathcal T},  {\mathcal F})$, then $$0 \rightarrow G(T(A)) \rightarrow G(A) \rightarrow G(F(A)) \rightarrow 0$$ is a short exact sequence in $\mathbb B$.
 \end{enumerate}
 \end{defn}

\begin{thm}\label{theorem_tt_functor}
Let $(\Cal T, \Cal F)$ be a pretorsion theory in a lextensive category $\mathbb{C}$ and assume that $\Cal T$ is closed under complemented subobjects. Then:
\begin{enumerate}
\item
the stable category $\Stab C$ is well-defined;
\item
the functor $\Sigma\colon \mathbb{C}\to \Stab C$ sends $\Cal Z$-kernels to kernels and $\Cal Z$-short exact sequences to short exact sequences. It also sends $\Cal Z$-cokernels to cokernels provided all $\Cal Z$-cokernels exist in $\mathbb{C}$.
\item
$(\Cal T, \Cal F)$ is a torsion theory in $\Stab C$ and $\Sigma\colon \mathbb{C}\to \Stab C$ is a torsion theory functor.
\end{enumerate}
\end{thm}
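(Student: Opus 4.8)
The plan is to derive the three assertions from the preservation results of the two preceding sections; the only genuinely new ingredient is the hereditary pullback square~\eqref{diagram_1}, which becomes available precisely because $\Cal T$ is assumed closed under complemented subobjects.

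For (1) I would check the two running hypotheses of Section~\ref{section_STABLE}. Since $\Cal T$ is closed under complemented subobjects, Proposition~\ref{proposition_TFZ_closed} shows that $\Cal Z$ is closed both under complemented subobjects and under binary coproducts, which are exactly Properties $(1)$ and $(2)$. Hence $\sim$ is a congruence by Proposition~\ref{cong} and the quotient $\Stab C$ exists, requiring no assumption on the existence of $\Cal Z$-kernels or $\Cal Z$-cokernels.

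For (2), the statement that $\Sigma$ sends $\Cal Z$-kernels to kernels is Proposition~\ref{kernels_preserved}, and the clause concerning $\Cal Z$-cokernels is Corollary~\ref{corollary_cokernel_preserved}. The delicate point, and the main obstacle, is that $\Sigma$ must carry the canonical $\Cal Z$-short exact sequence $T(X)\xrightarrow{\varepsilon_X}X\xrightarrow{\eta_X}F(X)$ to a short exact sequence \emph{without} assuming that all $\Cal Z$-cokernels exist. Since $\varepsilon_X$ is the $\Cal Z$-kernel of $\eta_X$, Proposition~\ref{kernels_preserved} gives $\Sigma(\varepsilon_X)=\ker\Sigma(\eta_X)$. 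For the other half I would compute the cokernel of $\parm{1}{\varepsilon_X}$ using Proposition~\ref{cokernels_preserved}, whose hypothesis requires that for every complemented subobject $\beta\colon B\rightarrowtail X$ the pullback $\varepsilon_X^{-1}(B)\to B$ admit a $\Cal Z$-cokernel. Here the square~\eqref{diagram_1} identifies $\varepsilon_X^{-1}(B)$ with $T(B)$ and this induced map with $\varepsilon_B\colon T(B)\to B$, whose $\Cal Z$-cokernel is the always-available reflection unit $\eta_B\colon B\to F(B)$. Proposition~\ref{cokernels_preserved} then yields $\operatorname{coker}\Sigma(\varepsilon_X)=\Sigma(\operatorname{coker}\varepsilon_X)=\Sigma(\eta_X)$, so the canonical sequence goes to a short exact sequence; the same computation, now invoking Corollary~\ref{corollary_cokernel_preserved}, handles an arbitrary $\Cal Z$-short exact sequence once all $\Cal Z$-cokernels exist.

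Finally, for (3) I would take $(\Sigma(\Cal T),\Sigma(\Cal F))$ as the candidate torsion theory on the pointed category $\Stab C$. Both axioms of Definition~\ref{tt-functor} for $\Sigma$ then hold almost tautologically: condition (1) is just $\Sigma(A)\in\Sigma(\Cal T)$ and $\Sigma(B)\in\Sigma(\Cal F)$, while condition (2) is the short exactness of the image of the canonical sequence established in (2). It remains to verify the two axioms of a torsion theory in $\Stab C$. For orthogonality, a morphism $\parm{\alpha}{h}\colon \Sigma(T)\to\Sigma(F)$ with $T\in\Cal T$ and $F\in\Cal F$ has domain a complemented subobject $\alpha\colon A\rightarrowtail T$, so $A\in\Cal T$; then $h\colon A\to F$ runs from $\Cal T$ to $\Cal F$ and is therefore $\Cal Z$-trivial, whence $\parm{\alpha}{h}=0$ by Remark~\ref{zero}$(3)$, giving $\Hom_{\Stab C}(\Sigma(T),\Sigma(F))=0$. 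The required decomposition of an arbitrary object, which has the form $\Sigma(X)$, is again the short exact sequence $\Sigma(T(X))\to\Sigma(X)\to\Sigma(F(X))$ from (2). I expect the orthogonality step to be the one most worth spelling out, since it is exactly where the closure of $\Cal T$ under complemented subobjects re-enters to guarantee $A\in\Cal T$.
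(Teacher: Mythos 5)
Your proposal is correct and follows essentially the same route as the paper's proof: Proposition~\ref{proposition_TFZ_closed} to verify Properties $(1)$ and $(2)$ for well-definedness of $\Stab C$, Propositions~\ref{kernels_preserved} and~\ref{cokernels_preserved} for part (2) --- with the pullback square~\eqref{diagram_1} supplying the hypothesis of Proposition~\ref{cokernels_preserved} for $\varepsilon_X$ --- and closure of $\Cal T$ under complemented subobjects together with Remark~\ref{zero} for the orthogonality and the canonical decomposition in part (3). Your explicit identification of $\varepsilon_X^{-1}(B)$ with $T(B)$ and of the induced map with $\varepsilon_B$ merely spells out what the paper leaves implicit.
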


\begin{proof} $(1)$ From Proposition~\ref{proposition_TFZ_closed} the three subcategories $\Cal T, \Cal F$ and $\Cal Z$ are all closed under complemented subobjects and coproducts. By Lemma~\ref{lemma_coproduct}(b) the assumptions at the beginning of Section~\ref{section_STABLE} are satisfied, thus $\Stab C$ is well-defined.

$(2)$ Consider a short $\Cal Z$-exact sequence 
$$
\xymatrix{T(X)\ar[r]^-{\varepsilon_X}& X \ar[r]^-{\eta_X} & F(X)}
$$
and observe that since the left-side square in the diagram~(\ref{diagram_1}) is a pullback, the assumption of Proposition~\ref{cokernels_preserved} are satisfied for the morphism $\Sigma(\varepsilon_X)$. Thus the statement follows from Proposition~\ref{kernels_preserved} and Proposition~\ref{cokernels_preserved}.

$(3)$ Consider a morphism $\parm \alpha f$ from $X \in \Cal T$ to $Y \in \Cal F$,
$$
\xymatrix{
& A \ar@{>->}[dl]_{\alpha} \ar[dr]^f &\\
X \ar@{.>}[rr]_{\parm \alpha f } & & Y
}
$$
Since $\Cal T$ is closed under complemented subobjects, $A \in \Cal T$ and $f$ is a trivial morphism in $\mathbb{C}$. Thus, $\parm \alpha f=0$ by Remark~\ref{zero}.

Now, for any object $X \in \Stab C$ there is a short $\Cal Z$-exact sequence in $\mathbb{C}$	of the form
$$
\xymatrix{T(X)\ar[r]^-{\varepsilon_X}& X \ar[r]^-{\eta_X} & F(X)}
$$
and by $(2)$ its image via the functor $\Sigma$ is a short exact sequence in $\Stab C$.
\end{proof}

\begin{thm}\label{universal}
Let $(\Cal T, \Cal F)$ be a pretorsion theory in a lextensive category $\mathbb{C}$ and assume that $\Cal T$ is closed under complemented subobjects. The quotient functor $\Sigma \colon \mathbb C \to \Stab C$  is universal among all finite coproduct preserving \emph{torsion theory functors} $G \colon \mathbb{C} \to \mathbb X$, where $\mathbb X$ has a torsion theory $(\Cal T', \Cal F')$ and finite coproducts. This means that any finite coproduct preserving torsion theory functor $G \colon \mathbb{C} \to \mathbb X$ factors uniquely through $\Sigma$:
$$
\xymatrix{
\mathbb{C} \ar[rr]^-{\Sigma} \ar[rd]_{\forall G} & & \Stab C \ar@{.>}[ld]^{\exists ! H} \\
& \mathbb X &
}
$$
Moreover, the induced functor $H$ preserves finite coproducts, and it is a torsion theory functor.
\end{thm}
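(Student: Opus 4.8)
The plan is to construct the comparison functor $H\colon \Stab C \to \mathbb X$ explicitly, check it is well defined and functorial, verify the equality $H\Sigma = G$, prove uniqueness, and finally establish that $H$ preserves finite coproducts and is a torsion theory functor. Since $\Sigma$ is the identity on objects, the condition $H\Sigma = G$ forces $H(X):=G(X)$ on objects. For a morphism $\parm{\alpha}{f}\colon X\to Y$ represented by a pair $(\alpha,f)$ with $\alpha\colon A\rightarrowtail X$ a complemented subobject, I write $X = A + A^c$ and use that $G$ preserves finite coproducts to identify $G(X)\cong G(A)+G(A^c)$. I then set $H(\parm{\alpha}{f}) := [G(f),0]\colon G(A)+G(A^c)\to G(Y)$, the copairing that equals $G(f)$ on $G(A)$ and the zero morphism on $G(A^c)$ (available because $\mathbb X$ is pointed). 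Informally, $H$ records that a partial map acts by $G(f)$ where it is defined and by zero elsewhere.

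The first point to isolate is that, because $G$ is a torsion theory functor, it sends every trivial object $Z\in\Cal Z=\Cal T\cap\Cal F$ into $\Cal T'\cap\Cal F'=\{0\}$, and hence sends every trivial morphism to a zero morphism. Independence of $H(\parm{\alpha}{f})$ from the chosen representative of the partial map is immediate from the functoriality of $G$. The substantial step is invariance under the congruence $\sim$: given a congruence diagram \eqref{C-diagram}, so that $A_1=C+C_1^c$, $A_2=C+C_2^c$, $f_1\gamma_1=f_2\gamma_2$ and $f_1\gamma_1^c$, $f_2\gamma_2^c$ are trivial, I decompose $G(X)\cong G(C)+G(C^c)$ along the common summand $C$ (its complement $C^c$ in $X$ being unique up to canonical isomorphism by extensivity). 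Preservation of coproducts together with the vanishing of $G$ on $f_1\gamma_1^c$ and $f_2\gamma_2^c$ then shows that both $H(\parm{\alpha_1}{f_1})$ and $H(\parm{\alpha_2}{f_2})$ restrict to $G(f_1\gamma_1)=G(f_2\gamma_2)$ on $G(C)$ and to $0$ on $G(C^c)$; hence they agree.

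Functoriality on identities is clear. For composition I use the defining pullback $A'=f^{-1}(B)$ of $(\beta,g)\circ(\alpha,f)=(\alpha\alpha',gf')$, together with the decomposition $A\cong f^{-1}(B)+f^{-1}(B^c)$ furnished by universality of sums and the relation $f\alpha'=\beta f'$ coming from the pullback square. Applying $G$ and computing on $G(X)\cong G(A')+G((A')^c)+G(A^c)$, both $H(\parm{\beta}{g})\circ H(\parm{\alpha}{f})$ and $H(\parm{\alpha\alpha'}{gf'})$ are seen to equal $G(gf')$ on $G(A')$ and $0$ on the remaining summands, so they coincide. The identity $H\Sigma = G$ is then immediate, since $\Sigma(f)=\parm{1_X}{f}$ has full domain and the copairing with empty complement is simply $G(f)$.

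For uniqueness, any functor $H'$ with $H'\Sigma=G$ agrees with $H$ on objects; on morphisms one writes $\parm{\alpha}{f}=\Sigma(f)\circ\parm{\alpha}{1_A}$ and characterises the partial projection $\parm{\alpha}{1_A}\colon X\to A$ by the relations $\parm{\alpha}{1_A}\circ\Sigma(\alpha)=1_A$ and $\parm{\alpha}{1_A}\circ\Sigma(\alpha^c)=0$. These force $H'(\parm{\alpha}{1_A})$ to be the copairing $[1,0]\colon G(A)+G(A^c)\to G(A)$ by the universal property of the coproduct in $\mathbb X$, whence $H'=H$. That $H$ preserves finite coproducts follows since the coproduct of $X$ and $Y$ in $\Stab C$ is $X+Y$ with coprojections $\Sigma(\iota_1),\Sigma(\iota_2)$ (this is the content of Proposition~\ref{coproducts-preserved}), and $H$ carries this diagram to $G(X)\to G(X+Y)\leftarrow G(Y)$, which is a coproduct because $G$ preserves coproducts. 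Finally $H$ is a torsion theory functor: on objects $H=G$, so $H$ maps $\Cal T$ into $\Cal T'$ and $\Cal F$ into $\Cal F'$; and applying $H$ to the canonical torsion sequence $\Sigma(T(X))\to X\to\Sigma(F(X))$ of $\Stab C$ (a short exact sequence by Theorem~\ref{theorem_tt_functor}) yields $G(T(X))\to G(X)\to G(F(X))$, which is short exact since $G$ is a torsion theory functor. The main obstacle is the coproduct bookkeeping in the well-definedness and composition steps, where one juggles several decompositions of the same object and must repeatedly invoke universality of sums; once the single principle that $G$ preserves coproducts and annihilates trivial morphisms is isolated, each verification reduces to a routine comparison of copairings.
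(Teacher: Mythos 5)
Your construction coincides with the paper's: $H$ is forced to be $G$ on objects and $H(\parm{\alpha}{f}) = G(f)+0$ (the copairing through $G(X)\cong G(A)+G(A^c)$) on morphisms, the uniqueness is obtained by the same forcing argument via the coproduct decomposition $X = A + A^c$ in $\Stab C$, and the verifications that $H$ preserves finite coproducts and is a torsion theory functor run exactly as in the paper's proof (using Proposition~\ref{coproducts-preserved} and Theorem~\ref{theorem_tt_functor}). The only difference is that the paper outsources the well-definedness and functoriality of $H$ on morphisms to \cite[Theorem~2.3]{BCG2}, whereas you sketch these checks directly — correctly in outline, using that $G$ annihilates trivial morphisms (since $\Cal T'\cap\Cal F'=\{0\}$) and preserves coproducts, with only routine intersection/bookkeeping details left implicit, as you acknowledge.
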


\begin{proof}
Since $\mathbb{C}$ and $\mathsf{Stab}(\mathbb C)$ have the same objects it is clear that the definition of the functor $H$ on the objects is entirely determined by the functor $G$, so that
$H(X) = G(X)$, for any object $X$ in $\Stab C$. Let then $\parm \alpha f  \colon X \rightarrow Y$ be a morphism in $\Stab C$, represented by
$$
\xymatrix{
& A \ar@{ >->}[dl]_\alpha \ar[dr]^f & \\
X \ar@{.>}[rr]_{(\alpha, f)} & & Y
}
$$
and observe that it is equal to $f+0$, the morphism induced by the universal property of the coproduct $A \coprod A^c = X$ (see \cite[Proposition 1.5]{BCG2}).
Again, the condition $H \cdot \Sigma= G$ and the fact that $H$ has to preserve binary coproducts force the definition of the functor $H$ on morphisms:
$$
H(\parm \alpha f) = G(f)+0.
$$
The above arguments already prove the uniqueness of the functor $H$ with the above properties.
The proof of the fact that $H$ is well-defined on morphisms is the same as the one given in \cite[Theorem~2.3]{BCG2} for the case of internal preorders in a pretopos.
\end{proof}

\section{Examples}\label{section_examples}

In this section we are going to present some examples of pretorsion theories in lextensive categories that satisfy the assumptions of Theorems~\ref{theorem_tt_functor} and~\ref{universal}.

\subsection*{Internal preorders in a pretopos}
As already said, this work is inspired by the results obtained in \cite{BCG, BCG2} for the category $\mathsf{PreOrd}(\mathbb C)$ of internal preorders in a pretopos $\mathbb C$ (recall that a pretopos is a (Barr-)exact and extensive category \cite{Elep}). An object $(A, \rho)$ in $\mathsf{PreOrd}(\mathbb C)$ is a relation $\langle r_1,r_2 \rangle \colon \rho \rightarrow A \times A$ on $A$, i.e. a subobject of $A \times A$, that is \emph{reflexive}, i.e. it contains the ``discrete relation'' $\langle 1_A, 1_A \rangle \colon A \rightarrow A \times A$ on $A$, denoted by $\Delta_A$, 
and \emph{transitive}: there is a
morphism $\tau \colon \rho \times_A \rho \rightarrow \rho$ such that $r_1 \tau =  r_1 p_1$ and $r_2 \tau =  r_2 p_2$, where $(\rho \times_A \rho, p_1, p_2)$ is the pullback
$$
\xymatrix{ \rho \times_A \rho \ar[r]^-{p_2} \ar[d]_-{p_1}& \rho \ar[d]^{r_1} \\
\rho \ar[r]_{r_2} & A.
}
$$
A morphism $(A, \rho) \rightarrow  (B, \sigma)$ in the category $\mathsf{PreOrd}(\mathbb C)$ of preorders in $\mathbb C$ is a pair $(f,\hat{f})$ of morphisms in $\mathbb C$ making the diagram
$$
\xymatrix{  \rho \ar@<.5ex>[d]^{r_2} \ar@<-.5ex>[d]_{r_1}  \ar[r]^{\hat{f}}  & \sigma \ar@<.5ex>[d]^{s_2} \ar@<-.5ex>[d]_{s_1}  \\
 A \ar[r]_{f} & {B,} &
}
$$
commute, i.e. $f r_1= s_1 \hat{f} $ and $f r_2= s_2 \hat{f}$.

There is a pretorsion theory $(\mathsf{Eq}(\mathbb C), \mathsf{Par}(\mathbb C))$ in $\mathsf{PreOrd}(\mathbb C)$, where $\Equiv C$ is the full subcategory of all symmetric preorders (i.e. of all equivalence relations) while $\ParOrd C$ is the full subcategory of all antisymmetric preorders (i.e. of all partial orders) \cite{FFG2}. The class $\Disc C$ of trivial objects consists of all the discrete relations, i.e. those preorders of the form $(A,\Delta_A)$.

\subsection*{Two pretorsion theories in the category of topological spaces}
 The next two examples are fully described in \cite{FFG2}. Recall that a topology on a set $X$ is called a partition topology if there exists a partition of $X$ which is a base of open sets of the topology. Let $\Cal P$ be the full subcategory of the category $\mathsf{Top}$ of topological spaces whose objects are all the spaces whose topology is a partition topology and let $\Cal K$ be the full subcategory of all Kolmogorov spaces (i.e.$\mathsf{T_0}$ spaces). Then $(\Cal P, \Cal K)$ is a pretorsion theory in $\mathsf{Top}$ \cite[Section~6.6]{FFG}. The class $\Cal Z$ of trivial objects consists of all the discrete topological spaces.
Another pretorsion theory in $\mathsf{Top}$ is given by the pair $(\Cal T, \Cal F)$, where $\Cal T$ is the subcategory of all topological spaces for which the connected components are open subsets and $\Cal F$ is the subcategory of all totally disconnected topological spaces \cite[Section~6.7]{FFG2}. Also in this case, the class $\Cal Z$ of trivial objects consists of the discrete topological spaces.

\subsection*{The category of endomappings of finite sets}

In \cite{FH}, the authors describe a pretorsion theory in the category $\Cal M$ of endomappings of finite sets. The objects of $\Cal M$ are all pairs $(X,f)$, where $X=\{1,2,\dots,n\}$ for some $n\geq 0$ is a finite set and $f\colon X \to X$ is a map. A morphism $\varphi\colon (X,f)\to (Y,g)$ in $\Cal M$ is a map $\varphi\colon X\to Y$ such that $g\cdot \varphi=\varphi\cdot f$.
There is a pretorsion theory $(\Cal C, \Cal F)$ on $\Cal M$ \cite[Theorem~7.2]{FH}, where $\Cal C$ is the full subcategory of $\Cal M$ whose objects are the pairs $(X,f)$ with $f$ a bijection and $\Cal F$ is the full subcategory of $\Cal M$ whose objects are the pairs $(X,f)$ where $f^{|X|}=f^{|X|+1}$ (here $\Cal C$ and $\Cal F$ stand for ``cycles" and ``forests" respectively, according to the description of the objects $(X,f)$ in terms of the oriented graph $\Cal G_f$ associated with $f$ \cite[Section~6]{FH}). The trivial objects are those of the form $(X,\Id X)$.

The category $\mathcal{M}$ can be embedded in $\mathsf{PreOrd}(\mathsf{Set})$ via the assignment $(X,f)\mapsto (X,\rho_f)$, where $\rho_f$ is the preorder on $X$ defined by $x\rho_f y$ if and only if $x=f^t(y)$ for some integer $t\geq0$. Accordingly, $\Cal M$ can be identified with a (non-full) subcategory of $\mathsf{PreOrd}(\mathsf{Set})$ and under this identification we have that $\Cal C=\Cal M \cap \mathsf{Equiv}(\mathsf{Set})$ and $\Cal F= \Cal M \cap \mathsf{ParOrd}(\mathsf{Set})$.

The category $\Cal M$ can be also seen as a full subcategory of the category of oriented graphs with loops but no multiple edges between two vertices. An object $(X, f)$ corresponds to a graph $(X, \Cal G_f)$, where the set of edges is $\Cal G_f=\{(x, f(x))\mid x \in X\}$ and thus the objects of $\Cal M$ are the oriented graphs in which every vertex has outer degree equal to one.

The complemented subobjects of an object $(X,f)$ can be easily described in terms of the oriented graph $\Cal G_f$ associated with $f$: they correspond to (unions of) the connected components of $\Cal G_f$.
The category $\Cal M$ is lextensive, and the sum of two objects $(X,f),(Y,g)\in \Cal M$ is computed ``componentwise": $(X,f)+(Y,g)=(X+Y,f+g)$.




It is worth mentioning that in \cite{FH} the authors propose a construction of the ``stable category", getting a slightly different result from ours. Here we give an explicit description of our construction by using the same notations as in \cite{FH}, so that the reader can easily compare the two approaches. First, we embed $\Cal M$ in a pointed category $\Cal M_*$ whose objects are the pairs $(X_*,f_*)$ where $X_*=\{0,1,\dots,n\}$ is a finite set and $f_*\colon X_*\to X_*$ is mapping such that $f_*^{-1}(0)=\{0\}$. A morphism $\varphi_*\colon (X_*,f_*)\to (Y_*,g_*)$ between two objects of $\Cal M_*$ is a map $\varphi_*\colon X_*\to Y_*$ such that $g_* \cdot \varphi_*=\varphi_*\cdot f_*$ and $\varphi_*(0)=0$. Thus $\Cal M$ can be embedded in $\Cal M_*$ via the assignment $(X,f)\mapsto (X_*,f_*)$, where $X_*:=X\cup\{0\}$ and $f_*$ is the extension of $f$ with $f_*(0)=0$. Under this identification we also have $\Ob (\Cal M)=\Ob (\Cal M_*)$ and we can write $X=X_*\setminus\{0\}$. The category $\Cal M_*$ is equivalent to the category of partial maps $\Par {\Cal M}$. A morphism $\varphi_*\colon (X_*,f_*)\to (Y_*,g_*)$ in $\Cal M_*$ corresponds to a partial morphism 
$$
\xymatrix{
 & (\varphi_*^{-1}(Y), f') \ar@{ >->}[dl]_{\iota} \ar[dr]^{\varphi'} & \\
 (X, f) \ar@{.>}[rr]_{\varphi_*} & & (Y,g)
}
$$
where $f'$ and $\varphi'$ are the restrictions of $f$ and $\varphi_*$ at $\varphi_*^{-1}(Y)$ and $\iota$ is the inclusion map (the idea is that ``we can forget all the elements of $X_*$ going to $0$"). Thus, the congruence of Proposition~\ref{cong} can be rephrased as follows: given two morphisms $\varphi_*,\psi_*\colon (X_*,f_*)\to(Y_*,g_*)$ in $\Cal M_*$ we have $\varphi_*\sim \psi_*$ if there exists a complemented subobject $(C,f')$ of $(X, f)$ such that $\varphi_*$ coincides with $\psi_*$ on $C_*$ and both $\varphi_*$ and $\psi_*$ are trivial on the complement of $C$ in $X$.

\subsection*{The category of all (small) categories}

Consider the category $\mathsf{Cat}$ of all small categories. According to \cite{Xarez}, a category $\mathbb{T} \in \mathsf{Cat}$ is a \emph{symmetric} category if for every $X,Y \in \mathbb{T}$, if $\Hom(X,Y)\neq \emptyset$, then $\Hom(Y,X)\neq \emptyset$. A category $\mathbb{F}\in \mathsf{Cat}$ is \emph{antisymmetric} if for every $X,Y \in \mathbb{F}$, if $\Hom(X,Y)\neq \emptyset$ and $\Hom(Y,X)\neq \emptyset$, then $X=Y$. There is a pretorsion theory in $\mathsf{Cat}$ given by the pair $(\mathsf{CatEquiv},\mathsf{CatOrd})$ where $\mathsf{CatEquiv}$ and $\mathsf{CatOrd}$ are the full subcategories of $\mathsf{Cat}$ of symmetric and antisymmetric categories respectively \cite[Theorem~5.1]{Xarez}. In this example the trivial objects are classes of monoids. According to our results, also this pretorsion theory then admits a universal stable category, thanks to Theorem \ref{universal}, a fact that was not considered in \cite{Xarez}.

\subsection*{Another pretorsion theory in $\mathsf{Cat}$} This last example can be found in \cite{BCGT}. It is possible to consider another pretorsion theory in $\mathsf{Cat}$ having the groupoids as torsion objects and the skeletal categories (i.e. the ones where every isomorphism is an automorphism) as torsion-free objects. In this case, the trivial objects are classes of groups. Since the subcategory of groupoids is closed under complemented subobjects in $\mathsf{Cat}$, Theorem~\ref{universal} can be applied also in this case, and it is then possible to associate a stable category with this pretorsion theory in $\mathsf{Cat}$.

\begin{rem}
We would like to conclude the paper by mentioning the existence of a wide literature on radical and connectedness theories, that are clearly related to the present work. In particular the book of B.J. Gardner and R. Wiegandt \cite{Gar} and the article by M.M. Clementino and W. Tholen \cite{CT} have a rich bibliography on radical theories and connectedness theories, respectively. A categorical approach to radical and torsion theories based on the notion of ideal of morphisms \cite{Ehr} has been recently developed by M. Grandis, G. Janelidze, L. M\'arki \cite{GJM, GJ} and S. Mantovani \cite{Man}.
\end{rem}

\newpage

\end{document}